\newtheorem{proposition}{Proposition}[section]
\newtheorem{lemma}{Lemma}[section]
\newtheorem{theorem}{Theorem}[section]
\newtheorem{corollary}{Corollary}[section]
\theoremstyle{definition}
\newtheorem{definition}{Definition}[section]
\newtheorem{remark}{Remark}[section]
\newtheorem{example}{Example}[section]
\begin{document}

\title{Basic Morse-Novikov cohomology for foliations}

\author{Liviu Ornea}\thanks{L.O. is partially supported by CNCS UEFISCDI, project
number PN-II-ID-PCE-2011-3-0118.}

 \author{Vladimir Slesar}

\date{\today }

\begin{abstract}
In this paper we find sufficient conditions for the vanishing of the  Morse-Novikov cohomology on Riemannian foliations.
We work out a Bochner technique for twisted cohomological complexes, obtaining corresponding vanishing results. Also, we generalize for our setting vanishing results from the case of closed Riemannian manifolds. Several examples are presented, along with applications in the context of l.c.s. and l.c.K. foliations.

\end{abstract}

\maketitle

\hfill

{\bf Keywords:} Riemannian foliations, Morse-Novikov cohomology, locally conformally symplectic manifolds.

{\bf 2000 Mathematics Subject
Classification:} { 53C12, 58A12, 53C21.}

\hfill

\section{Introduction}\label{1}
We consider in what follows a smooth manifold $M$ endowed with a global closed differential one-form $\theta$. The
\emph{Morse-Novikov cohomology complex} $\left(\Omega, d_{\theta}\right)$ (where $\Omega$ is the de\thinspace Rham complex of the manifold $M$, while $d_{\theta}$ is the \emph{twisted derivative} $d_{\theta}:=d-\theta \wedge$) plays an important role
when investigating aspects related to the geometry, topology and Morse theory of the underlying manifold $M$ (see \emph{e.g}. \cite{Paj}).

Cohomological complexes of this type were also introduced and studied by Lichnerowicz in the context of Poisson geometry \cite{Lich} (in many papers this cohomology is also called \emph{Lichnerowicz cohomology}).

Classical examples of Morse-Novikov cohomology are obtained on locally conformally symplectic manifolds and locally conformally K\"ahler manifolds \cite{Orn-Verb, Vais3}. These manifolds admit local symplectic and K\"ahler structures which cannot be extended to the whole manifold. Instead, at the global level a closed one-form is obtained (called the \emph{Lee form}), and a Morse-Novikov cohomology naturally appears.

Twisted differential operators and Morse-Novikov cohomology can be canonically extended in the larger framework represented by
Riemannian foliations (\emph{i.e.} foliations with Riemannian structure that locally induces Riemannian submersions \cite{Re}).
The transverse geometry of the foliations represents the extension of the geometry of Riemannian manifolds; the classical setting is obtained in the absolute case of manifolds foliated by points \cite{Mo, To}.

On Riemannian foliations defined on a closed manifold, such twisted basic cohomological objects are mostly used
for the case when $\theta$ is related to the \emph{mean curvature form}. For example, in \cite{Do} the author studies the tenseness of Riemannian foliations (\emph{i.e.} the existence of a metric with basic (projectable) mean curvature form). In \cite{Hab-Ric}, twisted (modified) differentials are used as an alternative to the classical approach in order to investigate the tautness of the foliation (\emph{i.e.} the existence of a metric which turns all leaves into minimal submanifolds), and to perform basic harmonic analysis.

In this paper we stick with the general case of basic Morse-Novikov cohomology associated to a basic, closed one-form, and present several instances in which the groups of this cohomology are trivial.

First of all, we generalize to our setting previous vanishing results known  in the classical case when $\theta$ is parallel \cite{Leon} and, respectively, non-exact \cite{Lich-Gued}. Then we study the influence of the basic curvature on the basic Morse-Novikov cohomological groups. In addition to the case when this curvature operator in non-negative, in the second part of the paper we work out a Bochner-type technique specific for our particular framework.

Consequently, we obtain vanishing results which may hold even in the case when the basic curvature is not necessarily non-negative (in this case the basic de\thinspace Rham cohomology groups may not be trivial).

Concerning the last result, two particular cases are relevant. These are the case of classical closed Riemannian manifolds and the case of the basic de\thinspace Rham cohomology of Riemannian foliations. On the other hand, the above vanishing results have analogues in the context of locally conformally symplectic and locally conformally K\"ahler foliations.

The paper is organized as follows. In the next section we present the main features of Riemannian foliations with basic mean curvature form, which represent our framework throughout this paper. In section 3 we introduce several technical tools we use in the rest of the paper. More  precisely, the three subsections present the twisted Bott connection, the twisted basic curvature operator and a corresponding Weitzenb\"ock formula. In section 4 we present the main results of the paper along with several examples. The consequences of these results for the setting of locally conformally symplectic and locally conformally K\"ahler foliations are briefly
stated in the final section.
\section{Preliminaries}\label{2}

\subsection{Basic facts about foliations}\label{2.1}

We consider in what follows a smooth, closed Riemannian
manifold $(M,g,\mathcal{F})$ endowed with a foliation $\mathcal{F}$ such
that the metric $g$ is bundle-like \cite{Re}; the dimension of $M$ will be
denoted by $n$. We denote by $T\mathcal{F}$ the leafwise distribution
tangent to leaves. A classical vector bundle constructed on $M$ is $Q:=TM/T\mathcal{F}$ (see \emph{e.g.} \cite{To}). Note that
the definition of $Q$ does not require the metric $g$. Considering $g$, we can further obtain $Q\simeq T\mathcal{F}^{\perp}$.
For convenience, in the following we denote also the transverse distribution $T\mathcal{F}^{\perp }$ by $Q$, in accordance with \cite{Al}. Assume $\dim T\mathcal{F}=p$, $\dim Q=q$, so $p+q=n$.

As a consequence, we get the following exact sequence of vector bundles
\[
0\longrightarrow T\mathcal{F}\longrightarrow TM \longrightarrow Q \longrightarrow 0\:. \nonumber
\]

A corresponding exact sequence for the dual vector bundles also appears (see \emph{e. g.} \cite{To}).
The canonical projection operators on  the distributions $Q$ and $T\mathcal{F}$
will be denoted by $\pi_Q$ and $\pi_{T\mathcal{F}}$, respectively.

Throughout this paper we use local vector fields $\left\{ e_i,f_a\right\} $
defined on a neighborhood of an arbitrary point $x\in M$, so that they
determine an orthonormal basis at any point where they are defined,
$\left\{e_i\right\} $ spanning the distribution $Q$ and $\left\{ f_a\right\} $
spanning the distribution $T\mathcal{F}$. In what follows we use the
classical `musical' isomorphisms $\sharp $ and $\flat $ determined by the
metric structure $g$. The coframe $\left\{ e^i,f^a\right\} $ will be also
employed, with $e^i:=e_i^{\flat}$, $f^a:=f_a^{\flat }$.

A standard linear connection used in the study of the basic geometry of
our Riemannian foliated manifold is the \emph{Bott connection} (see \emph{e.g.}
\cite{To}); it is a metric and torsion-free connection traditionally defined on the smooth sections of the
quotient bundle $TM/T\mathcal{F}$. According to our above considerations, we will define
the Bott connection $\nabla$ on the sections of $Q$ by the following relations
\[
\left\{
\begin{tabular}{l}
$\nabla _uw:=\pi _Q\left( [u,w]\right) $, \\
$\nabla _vw:=\pi _Q\left( \nabla^{M} _vw\right) $,
\end{tabular}
\right.
\]
for any smooth section $u\in \Gamma \left( T\mathcal{F}\right) $ and $v$, $w\in \Gamma \left( Q\right) $.

The transverse divergence associated to the Bott connection is defined in the usual manner, as a trace operator :
$$\mathrm{div}^\nabla :=\sum_i g(\nabla_{e_i}\cdot, e_i).$$

As in the case of Riemannian submersions, we investigate the geometric
objects that can be locally projected on submanifolds transverse to the leaves.
The restriction of the classical de\thinspace Rham complex of differential forms
$\Omega \left( M\right) $ to the complex of basic (projectable) differential
forms generates the \emph{basic de\thinspace Rham complex}, defined as

\[
\Omega _b\left( \mathcal{F}\right) :=\left\{ \eta \in \Omega \left(
M\right) \mid \iota _v\eta =0,\mathcal{L}_v\eta =0\mbox{\,for any\,}v\in \Gamma \left(T\mathcal{F} \right) \right\} .
\]
Here $\mathcal{L}$ is the Lie derivative along $v$, while $\iota $ stands for interior product.
The \emph{basic de\thinspace Rham derivative} is defined also as a restriction of the
classical derivative $d$, namely $d_b:=d_{\mid \Omega _b\left( \mathcal{F}\right)}$
(see \emph{e.g.}  \cite{To}).

\begin{remark}
The basic de\thinspace Rham complex is defined independently of
the metric structure $g$, and in fact the groups of the basic de\thinspace Rham cohomology
are topological invariants \cite{To}.
\end{remark}

\begin{remark}\label{btrans}
A direct computation shows that basic forms are parallel along the leaves with respect to the connection $\nabla $ associated to a bundle-like metric. A \emph{basic vector field} is a vector field $v$ parallel along leaves with respect to $\nabla $. If the vector field is also transverse, \emph{i.e.} it is a section in the transverse distribution, $v\in \Gamma(Q)$, then $v^{\flat }\in \Omega_b(\mathcal{F}) $. In the sequel, we use  basic (projectable) vector fields $\{ e_i \}$.
\end{remark}

For any basic forms $\alpha _1$, $\alpha _2\in \Omega _b^p(\mathcal{F})$,
we extend the notation $g\left( \alpha _1,\alpha_2\right) $
for the inner product canonically induced by the metric tensor
$g $ on $\Omega _b^p(\mathcal{F})$. Taking the integral on the closed
manifold $M$, we obtain the classical $L^2$ inner product
\[
\left\langle \alpha _1,\alpha _2\right\rangle :=\int_Mg\left( \alpha_1,\alpha _2\right) d\mu _g,
\]
where $d\mu _g$ is the measure induced on $M$ by $g$.

An interesting example of differential form which is not necessarily basic
is represented by the \emph{mean curvature form} (see \emph{e.g.} \cite{Al, Bad-Esc-Ianus}). It is denoted by $\kappa $
and it is defined as
$$\kappa ^{\sharp }:=\pi _Q( \sum_a \nabla^{M}_{f_a}f_a).$$

According to \cite{Al}, on any Riemannian foliation the mean curvature form
can be decomposed in a unique way as the sum
\[
\kappa =\kappa _b+\kappa _o ,
\]
where $\kappa_b$ is the $L^2$\--orthogonal projection of $\kappa$ onto the closure of
$\Omega _b\left( \mathcal{F}\right)$, with $\kappa _o$ being its orthogonal
complement. We note that $\kappa_b$ is smooth and closed \cite{Al}.

A fundamental result in this field is due to Dom\'\i nguez \cite{Do}, and it
shows that $\kappa _o$ can always be considered to be $0$. More precisely, any
Riemannian foliation defined on a closed manifold can be turned into a
foliation with basic mean curvature form by changing the bundle-like
metric such that the transverse metric remains unchanged. As we plan to work
with geometric objects related to the transverse metric structure, we
can make the standard assumption of a basic mean curvature without
actually restricting our framework (see \emph{e.g.} \cite{Hab-Ric, To}).

And hence, from now on we shall assume that $$\boxed{\kappa = \kappa_b}$$

We canonically extend the Bott connection $\nabla$ on $\Omega_b\left(\mathcal F \right)$;
this extension is denoted by $\nabla$ as well. The adjoint operator of $d_b$ with
respect to the above hilbertian product, which is called the \emph{basic de\thinspace Rham coderivative},
may be also written as \cite{Al}
\[
\delta _b:=\sum\limits_i-\iota _{e_i}\nabla _{e_i}+\iota _{\kappa^{\sharp}} .
\]

\subsection{Morse-Novikov cohomology with basic form}\label{2.2}
On a closed Riemannian manifold $M$, using a closed differential $1\--$form
$\theta $, we can define the twisted de\thinspace Rham derivative $d_\theta :\Omega
(M)\rightarrow \Omega (M)$,
\[
d_\theta :=d-\theta \wedge ,
\]
where $\Omega (M)$ is the de\thinspace Rham complex defined on $M$.

As $\theta $ is closed,  $d_\theta ^2=0$, and the Morse-Novikov
cohomological complex $\left( \Omega (M),d_\theta \right) $ can be defined
canonically. Note that if $\theta$ is exact, $\theta=df$, $f\in \mathcal{C}^{\infty} \left( M\right)$,
then the mapping $\left[ \alpha \right] \rightarrow \left[ e^{-f}\alpha \right] $ defines an isomorphism
between the de\thinspace Rham and Morse-Novikov cohomologies.

Also, the concept of Morse-Novikov cohomology can be easily extended to the basic Morse-Novikov
cohomology on a Riemannian foliation. More
precisely, assuming that $\theta $ is a basic closed one-form, then we can
write the {\em twisted basic de\thinspace Rham derivative} using the above defined basic
de\thinspace Rham differential operator $d_b$ (see \emph{e.g.}  \cite{Ida-Pop})
\[
d_{b,\theta} :=d_b-\theta \wedge ,
\]
and the basic Morse-Novikov complex $\left( \Omega _b(\mathcal{F}),d_{b,\theta }\right) $
is constructed. The Morse-Novikov cohomology groups
$\{H_{b,\theta }^i\left( \mathcal{F}\right)\}_{0\le i\le q}$
(which can be regarded as twisted de\thinspace Rham cohomology groups) are defined in the
usual way.

The particular case $\theta =\frac 12\kappa $ is investigated in \cite{Hab-Ric},
the authors obtaining vanishing results using curvature-type operators and
an interesting interplay with the tautness properties of the foliation.

For general $\theta $, in order to study these cohomological groups, we define
basic twisted differential operators compatible with $d_{b,\theta} $.

We introduce these operators in the next section.

\section{Basic twisted differential operators on Riemannian foliations}\label{3}
\subsection{The twisted Bott connection}\label{3.1}
The main tool we use to describe and investigate the twisted cohomology
groups $H_{b,\theta }^i\left( \mathcal{F}\right) $ is a linear connection
that we modify in a convenient way.

\begin{definition}
For a closed one-form $\theta \in \Omega _b\left( \mathcal{F}\right) $ and
vector fields $v\in \Gamma \left( TM\right) $, $w\in \Gamma \left( Q\right) $
we define the {\em twisted Bott connection} $\nabla ^\theta $
\[
\nabla _v^\theta w:=\nabla _vw-\theta(v)w .
\]
\end{definition}

For the particular case when $\theta =\frac 12\kappa $ we adopt the notation
$\nabla ^{\frac 12\kappa }:=\tilde \nabla $. As this connection will play an
important role in our further considerations, we choose to denote
\begin{equation}
\tilde \nabla ^\theta :=\nabla ^{\frac 12\kappa +\theta }. \label{nabla_tilde}
\end{equation}

The connection in (\ref{nabla_tilde}) is extended canonically to $\Omega _b\left( \mathcal{F}\right)$,
and for convenience it will be denoted as $\tilde \nabla^\theta $, too.

\begin{remark}
\label{tilde_parallel}As $\kappa ^{\sharp }$, $\theta ^{\sharp }\in \Gamma
\left( Q\right) $,  if $\alpha \in \Omega _b\left(
\mathcal{F}\right) $ and $v\in \Gamma \left( T\mathcal{F}\right) $, then
\[
\tilde \nabla _v^\theta \alpha =0,
\]
in other words, {\em the basic forms are `leafwise' parallel with respect to the new
connection $\tilde \nabla ^\theta $}.
\end{remark}

For the complementary case we have the following result.

\begin{lemma}\label{nabla_basic} If $v$ is a basic vector field, then the operators $\nabla_v$ and $\tilde \nabla^{\theta}_v$ map basic forms to basic forms.
\end{lemma}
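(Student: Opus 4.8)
The plan is to verify directly that, for $\alpha\in\Omega_b(\mathcal{F})$, each of $\nabla_v\alpha$ and $\tilde\nabla^{\theta}_v\alpha$ satisfies the two defining conditions of a basic form, $\iota_w\beta=0$ and $\mathcal{L}_w\beta=0$ for all $w\in\Gamma(T\mathcal{F})$. The reformulation I would rely on is that a form in $\Gamma(\bigwedge^{\bullet}Q^{*})$ (that is, with $\iota_w\beta=0$) is basic precisely when it is leafwise parallel, $\nabla_w\beta=0$. This is essentially the content of Remark \ref{btrans}, and it holds because $\mathcal{L}_w$ and $\nabla_w$ coincide on $\Gamma(\bigwedge^{\bullet}Q^{*})$ for leafwise $w$: both are derivations agreeing on functions, and on $\xi\in\Gamma(Q^{*})$ one has $(\mathcal{L}_w\xi)(u)=w(\xi(u))-\xi([w,u])=w(\xi(u))-\xi(\pi_Q[w,u])=(\nabla_w\xi)(u)$ for $u\in\Gamma(Q)$, using that $\xi$ annihilates $T\mathcal{F}$ and that $\nabla_w u=\pi_Q[w,u]$.

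First I would dispose of the interior-product condition: since $\nabla$ is a connection on $Q$, its canonical extension preserves $\Gamma(\bigwedge^{\bullet}Q^{*})$, and the same is true for $\tilde\nabla^{\theta}$, which differs from $\nabla$ only by the $Q$-preserving zeroth-order term $-(\tfrac12\kappa+\theta)(\cdot)\,\mathrm{id}$; as basic forms lie in $\Gamma(\bigwedge^{\bullet}Q^{*})$, we obtain $\iota_w(\nabla_v\alpha)=\iota_w(\tilde\nabla^{\theta}_v\alpha)=0$ at once. The core is then the leafwise-parallel condition $\nabla_w(\nabla_v\alpha)=0$, which I would establish through the curvature identity $\nabla_w\nabla_v=\nabla_v\nabla_w+\nabla_{[w,v]}+R^{\nabla}(w,v)$, killing each term: $\nabla_v(\nabla_w\alpha)=0$ since $\alpha$ is basic, hence leafwise parallel; $\nabla_{[w,v]}\alpha=0$ because $v$ basic forces $\pi_Q[w,v]=\nabla_w v=0$, so that $[w,v]\in\Gamma(T\mathcal{F})$ and $\alpha$ is again leafwise parallel; and $R^{\nabla}(w,v)\alpha=0$ by the partial flatness of the Bott connection along the leaves ($\iota_wR^{\nabla}=0$ on $Q$, hence on $\bigwedge^{\bullet}Q^{*}$). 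Combined with the first step, this shows $\nabla_v\alpha$ is basic.

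To pass from $\nabla$ to $\tilde\nabla^{\theta}$ I would reduce rather than recompute. On forms one has $\tilde\nabla^{\theta}_v\alpha=\nabla_v\alpha+\lambda\,\alpha$, where $\lambda$ equals $(\tfrac12\kappa+\theta)(v)$ up to a degree-dependent factor. This $\lambda$ is a basic function: for $w\in\Gamma(T\mathcal{F})$ the Leibniz rule gives $w\big((\tfrac12\kappa+\theta)(v)\big)=(\mathcal{L}_w(\tfrac12\kappa+\theta))(v)+(\tfrac12\kappa+\theta)([w,v])=0$, since $\tfrac12\kappa+\theta$ is a basic one-form (so its Lie derivative along $w$ vanishes and it annihilates $T\mathcal{F}$) and $[w,v]\in\Gamma(T\mathcal{F})$. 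As $\nabla_v\alpha$ is basic and the product of a basic function with the basic form $\alpha$ is basic, the sum $\tilde\nabla^{\theta}_v\alpha$ is basic as well. The step I expect to be the main obstacle is the curvature vanishing $R^{\nabla}(w,v)\alpha=0$: everything else is formal bookkeeping, whereas this rests on the partial flatness of the Bott connection in leafwise directions, which one must either invoke as a standard structural property or verify from $\nabla_w u=\pi_Q[w,u]$, and it is precisely the feature that makes the whole computation collapse.
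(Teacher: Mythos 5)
Your proof is correct, but it takes a genuinely different route from the paper's. The paper works concretely: it first shows that $\nabla_v$ preserves basic \emph{one}-forms by combining O'Neill's lemma for Riemannian submersions \cite{ON} with the fact that $\nabla$ is metric (so the musical isomorphism transports projectability from basic vector fields to one-forms), then handles arbitrary degree by expanding $\alpha$ in a local basic coframe with basic coefficient functions and invoking \cite[Proposition 2.2]{Mo} to see that the derivatives $v(f_{i_1,\dots,i_u})$ are again basic; the twisted case is settled, exactly as in your last paragraph, by noting that $(\tfrac12\kappa+\theta)(v)$ is a basic function. You instead recast basicness intrinsically --- a horizontal form is basic iff it is leafwise $\nabla$-parallel, which is Remark \ref{btrans} together with the converse you supply via the identity $\mathcal{L}_w=\nabla_w$ on $\Gamma(\bigwedge^{\bullet}Q^{*})$ --- and then derive leafwise parallelism of $\nabla_v\alpha$ from the curvature identity, killing the bracket term because $[w,v]\in\Gamma(T\mathcal{F})$ and the curvature term by the leafwise flatness $\iota_w R^{\nabla}=0$. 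That flatness is, as you yourself flag, the one substantive input: it is a standard property of the transverse Levi-Civita (Bott) connection of a bundle-like metric (it is precisely what makes $\nabla$ a basic connection in the sense of Kamber--Tondeur; see \cite{To, Mo}), so citing it is legitimate, but its usual proof runs through the projectability of $\nabla_v u$ for basic $v,u$ --- essentially the O'Neill statement the paper cites --- or through the local-quotient description of $\nabla$. So the two arguments are of comparable depth: yours buys a coordinate-free derivation that isolates a single structural fact, while the paper's is more elementary and self-contained at the cost of a local computation. Two further remarks in your favour: your verification that $(\tfrac12\kappa+\theta)(v)$ is basic (via $\mathcal{L}_w\theta=0$ and $\theta([w,v])=0$) fills in a point the paper only asserts, and your hedge about a degree-dependent factor in the extension of $\tilde\nabla^{\theta}$ to forms is harmless, since in either convention the zeroth-order term is multiplication by a basic function (the paper's own convention, visible in its proofs of Lemmas \ref{adj_div_bar} and \ref{Leibniz}, carries no degree factor).
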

\begin{proof} We first prove the statement for $\nabla$ and basic one-forms.  Let $w$ be a basic and transverse vector field. As, locally, a Riemannian foliation can be identified with a Riemannian submersion, from \cite[Lemma 1.(3)]{ON} and the definition of $\nabla$ we see that $\nabla_v w$ is basic and transverse. By Remark \ref{btrans},  one-forms can be related to basic and transverse vector fields using the musical isomorphism induced by the bundle-like metric $g$. As the  connection $\nabla$ is metric, $\nabla_v$ maps basic one-forms to basic one-forms. We use the local coframe $\{e^i,f_a\}$ with $\{ e^i\}$ basic forms. Any basic form $\alpha$ of dimension $u$, $1\le u \le p$, can be written

\[
\alpha=\sum_{1\le i_1<..<i_u\le p} f_{i_1,..,i_u}e^{i_1}\wedge..\wedge  e^{i_u},
\]
with smooth functions $f_{i_1,..,i_u}$. We obtain

\begin{eqnarray*}
\nabla _v\alpha  &=&\sum_{1\le i_1<..<i_u\le p}v(f_{i_1,..,i_u})e^{i_1}\wedge ..\wedge e^{i_u} \\
&&+\sum_{1\le i_1<..<i_u\le p}f_{i_1,..,i_u}\nabla _ve^{i_1}\wedge ..\wedge e^{i_u}+... \\
&&+\sum_{1\le i_1<..<i_u\le p}f_{i_1,..,i_u}e^{i_1}\wedge ..\wedge \nabla _ve^{i_u}.
\end{eqnarray*}

Using \cite[Proposition 2.2]{Mo}, the functions $v(f_{i_1,..,i_u})$ are basic. Then $\nabla_v$ maps basic forms to basic forms.

As for $\tilde \nabla^{\theta}_v$, note that $\theta$, $\kappa \in \Omega_b \left(\mathcal F \right)$  and $v$ are basic, and hence $(\frac 12\kappa+\theta)(v)$ is a basic function (constant on the  leaves), so that the connection $\tilde \nabla^{\theta}$  maps basic forms to basic forms.
\end{proof}

The interesting feature of the twisted Bott connection $\tilde \nabla^\theta $ is that it can be used to build up the twisted basic de\thinspace Rham derivative.  Denote \cite{Ida-Pop}
\[
\tilde d_{b,\theta }:=d_{b,\frac 12\kappa +\theta } .
\]

An alternative way to construct this operator is the following:
\begin{equation}
\tilde d_{b,\theta } =\sum_ie^i\wedge \tilde \nabla _{e_i}^\theta . \label{d_tilde_theta}
\end{equation}
We will also denote the cohomology groups associated to $\tilde d_{b,\theta} $
by $\tilde H_{b,\theta }^i\left( \mathcal{F}\right) $, with $\tilde
H_{b,\theta }^i\left( \mathcal{F}\right) =H_{b,\frac 12\kappa +\theta}^i\left( \mathcal{F}\right) $.

For the particular case $\theta \equiv 0$, one reobtains the basic modified
operator $\tilde d_b:=\tilde d_{b,0}$, with $\tilde{d}_{b,\theta}=\tilde{d}_b-\theta\wedge$,
and the corresponding cohomology complex
$\tilde H_b^i\left(\mathcal{F}\right):=\tilde H_{b,0}^i\left( \mathcal{F}\right)$,
in accordance with \cite{Hab-Ric}.

\subsection{Computational properties of the twisted basic operators}\label{3.2}

In the following we investigate several computational properties of the above introduced  twisted
basic operators (again, for the case $\theta \equiv 0$ see also \cite{Hab-Ric, Sles}).

Consider a basic vector field $v$. Using Lemma \ref{nabla_basic}, we see that the operators
$\tilde\nabla^{\theta}$ and $\tilde d_{b,\theta}$ send basic forms to basic forms. In the following we construct the \emph{basic adjoint operators} $\tilde \nabla _v^{\theta *}$ and $\tilde \delta _{b,\theta }:=\tilde d_{b,\theta }^{*}$, \emph{i.e.} the adjoint operators with respect to the restriction of the above $L^2$ inner product to the complex of basic forms; in the remaining part of the paper each time by adjoint operators we will understand basic adjoint operators. For the twisted Bott connection we obtain the
following result.

\begin{lemma}
\label{adj_div_bar}The (formal) adjoint operator associated to the differential operator $\tilde \nabla _v^\theta $ can be
computed with the formula
\begin{equation}
\tilde \nabla _v^{\theta *}=-\tilde \nabla _v^\theta -\mathrm{div}^\nabla v-2\theta(v) .  \label{diverg}
\end{equation}
\end{lemma}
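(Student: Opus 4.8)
The plan is to compute the adjoint directly from the definition of the $L^2$ inner product on basic forms, integrating by parts against the transverse divergence. I would start by recalling that $\tilde\nabla_v^\theta = \nabla_v - (\tfrac12\kappa+\theta)(v)$, so the multiplicative terms $(\tfrac12\kappa+\theta)(v)$ are real-valued basic functions and are therefore self-adjoint; their contribution to the adjoint is simply $-(\kappa+2\theta)(v)$, which already accounts for the $-2\theta(v)$ and part of the remaining terms once $\kappa$ is matched against $\mathrm{div}^\nabla v$. Thus the crux is to establish the adjoint formula for the \emph{untwisted} connection operator, namely that the formal adjoint of $\nabla_v$ on basic forms is $-\nabla_v - \mathrm{div}^\nabla v - \kappa(v)$, where $v$ is basic.

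To prove this I would take two basic forms $\alpha,\beta$ of the same degree and examine $\langle \nabla_v\alpha,\beta\rangle = \int_M g(\nabla_v\alpha,\beta)\,d\mu_g$. Since $\nabla$ is a metric connection (Bott connection), the Leibniz rule gives $v\,g(\alpha,\beta) = g(\nabla_v\alpha,\beta) + g(\alpha,\nabla_v\beta)$, so the integrand rearranges into $v\,g(\alpha,\beta) - g(\alpha,\nabla_v\beta)$. The function $f:=g(\alpha,\beta)$ is basic (constant on leaves, by Remark \ref{btrans} and Lemma \ref{nabla_basic}), so $v(f)$ is its transverse directional derivative, and the task reduces to integrating $\int_M v(f)\,d\mu_g$. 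Here I would invoke the transverse divergence theorem adapted to the foliated setting: for the bundle-like metric with basic mean curvature $\kappa=\kappa_b$, integration by parts picks up a boundary-free term governed by $\mathrm{div}^\nabla v$ together with the mean curvature correction, yielding $\int_M v(f)\,d\mu_g = -\int_M f\,(\mathrm{div}^\nabla v + \kappa(v))\,d\mu_g$.

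The main obstacle, and the step requiring the most care, is this divergence identity: one must track how the full Riemannian divergence of the basic vector field $v$ on $M$ splits into its transverse part $\mathrm{div}^\nabla v$ and a leafwise part, with the leafwise trace of $\nabla^M$ producing exactly the mean curvature pairing $\kappa(v)$. This is precisely the place where the standing assumption $\kappa=\kappa_b$ is used, ensuring the relevant terms are basic and the integral manipulations are legitimate; it also mirrors the structure of the coderivative formula $\delta_b=\sum_i-\iota_{e_i}\nabla_{e_i}+\iota_{\kappa^\sharp}$ quoted earlier, which is really the same computation in disguise. Once this identity is in hand, collecting terms gives $\tilde\nabla_v^{\theta*} = -\nabla_v - \mathrm{div}^\nabla v - \kappa(v) - (\tfrac12\kappa+\theta)(v)$, and rewriting $-\nabla_v-(\tfrac12\kappa+\theta)(v) = -\tilde\nabla_v^\theta$ while combining the two copies of $\kappa$ and $\theta$ yields $-\tilde\nabla_v^\theta - \mathrm{div}^\nabla v - 2\theta(v)$, as claimed. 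I would close by remarking that, as usual, all these are formal adjoints justified by the absence of boundary on the closed manifold $M$.
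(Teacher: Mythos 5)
Your overall strategy is the same as the paper's: split $\tilde\nabla_v^\theta$ into $\nabla_v$ plus multiplication by the basic function $-(\tfrac12\kappa+\theta)(v)$, use that $\nabla$ is metric to reduce $\langle\nabla_v\alpha,\beta\rangle+\langle\alpha,\nabla_v\beta\rangle$ to $\int_M v(f)\,d\mu_g$ with $f=g(\alpha,\beta)$, and finish with the divergence theorem on the closed manifold $M$, the mean curvature entering through the leafwise part of the full divergence. However, the step you yourself single out as the crux is stated with the wrong sign, and this breaks the proof. For $v\in\Gamma(Q)$ and a leafwise orthonormal frame $\{f_a\}$, differentiating $g(v,f_a)\equiv 0$ along the leaves gives $g(\nabla^{M}_{f_a}v,f_a)=-g(v,\nabla^{M}_{f_a}f_a)$, so the leafwise trace equals $-\kappa(v)$, not $+\kappa(v)$; hence $\mathrm{div}\,v=\mathrm{div}^\nabla v-\kappa(v)$, the correct integration-by-parts identity is $\int_M v(f)\,d\mu_g=-\int_M f\left(\mathrm{div}^\nabla v-\kappa(v)\right)d\mu_g$, and the formal adjoint of the untwisted operator is $\nabla_v^{*}=-\nabla_v-\mathrm{div}^\nabla v+\kappa(v)$, with $+\kappa(v)$. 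With your version ($-\kappa(v)$), adding the self-adjoint multiplication term $-(\tfrac12\kappa+\theta)(v)$ gives $-\nabla_v-\mathrm{div}^\nabla v-\tfrac32\kappa(v)-\theta(v)$, which differs from the lemma's right-hand side $-\tilde\nabla_v^\theta-\mathrm{div}^\nabla v-2\theta(v)=-\nabla_v+\tfrac12\kappa(v)-\theta(v)-\mathrm{div}^\nabla v$ by $-2\kappa(v)$.

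The final assembly then hides this discrepancy behind a second sign error: you write $-\nabla_v-(\tfrac12\kappa+\theta)(v)=-\tilde\nabla_v^\theta$, but by definition $\tilde\nabla_v^\theta=\nabla_v-(\tfrac12\kappa+\theta)(v)$, so $-\tilde\nabla_v^\theta=-\nabla_v+(\tfrac12\kappa+\theta)(v)$; moreover, no legitimate ``combining of the two copies of $\theta$'' can produce the $-2\theta(v)$ of the target out of the single $-\theta(v)$ present in your expression. Once both signs are repaired, the argument closes correctly and coincides with the paper's computation: $\tilde\nabla_v^{\theta*}=\nabla_v^{*}-(\tfrac12\kappa+\theta)(v)=-\nabla_v-\mathrm{div}^\nabla v+\tfrac12\kappa(v)-\theta(v)=-\tilde\nabla_v^\theta-\mathrm{div}^\nabla v-2\theta(v)$. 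So the route is the right one, but as written the two key identities carry wrong signs and the proposal does not prove the stated formula.
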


\begin{proof} Consider the operator $T:=-\tilde \nabla _v^\theta -\mathrm{div}^\nabla v-2\theta(v)$. For all smooth basic forms $\alpha_1$ and $\alpha_2$ we evaluate the expression
\[
S:=\ \left\langle \tilde \nabla _v^\theta \alpha _1,\alpha _2\right\rangle
-\left\langle \alpha _1,T \alpha_2\right\rangle .
\]

We obtain
\begin{equation}\label{adjoint_diverg_1}
\begin{split}
\left\langle \tilde \nabla _v^\theta \alpha _1,\alpha _2\right\rangle
&=\left\langle \nabla _v\alpha _1,\alpha _2\right\rangle -\frac
12\left\langle \kappa(v) \alpha _1,\alpha_2\right\rangle  \\
&-\left\langle \theta(v) \alpha _1,\alpha_2\right\rangle
\end{split}
\end{equation}
and
\begin{equation}\label{adjoint_diverg_2}
\begin{split}
\left\langle \alpha _1,T \alpha_2\right\rangle &=-\left\langle \alpha _1,\nabla _v\alpha _2\right\rangle +\frac12\left\langle \alpha _1,\kappa(v) \alpha_2\right\rangle   \\
& -\left\langle \alpha _1, \theta(v) \alpha_2\right\rangle
-\left\langle \alpha _1,(\mathrm{div}^\nabla v)\alpha _2\right\rangle .
\end{split}
\end{equation}
From (\ref{adjoint_diverg_1}) and (\ref{adjoint_diverg_2}), similarly to the classical case, we get
\begin{equation*}
\begin{split}
\ S &=\int_Mv \left(g( \alpha _1,\alpha _2)\right) d\mu _g
+\int_M\mathrm{div}^\nabla v\,g( \alpha _1,\alpha _2) d\mu _g \\
& +\int_Mg( \nabla^{M}_{f_a}v,f_a) \,g( \alpha_1,\alpha _2) d\mu _g \\
 &=\int_M\left( v\left( g( \alpha _1,\alpha _2) \right) +
\mathrm{div}v\,g( \alpha _1,\alpha _2) \right) d\mu _g \\
&=\int_M\mathrm{div}\left( g( \alpha _1,\alpha _2) v\right) =0\quad\text{by Green theorem, \cite
{Po}}.
\end{split}
\end{equation*}

Then, $T$ is in fact the formal adjoint operator of $\tilde{\nabla}_v^{\theta}$, so
\[
\tilde \nabla _v^{\theta *}=-\tilde \nabla _v^\theta -\mathrm{div}^\nabla v-2\theta(v) .
\]

\end{proof}

We now compute  the adjoint operator $\tilde{\delta}_{b,\theta}=\tilde{d}^{*}_{b,\theta}$. Consider first the operator
\begin{equation*}
\begin{split}
\tilde \delta _{b} &:=\sum_i-\iota _{e_i}\nabla _{e_i}+\frac
12\iota _{\kappa ^{\sharp }} \\
&=\delta _b-\frac 12\iota _{\kappa ^{\sharp }}
\end{split}
\end{equation*}
which is known to be the adjoint of $\tilde d_{b}$, \cite{Hab-Ric}. Then, the operator
\begin{equation}
\begin{split}
\tilde \delta _{b,\theta} &:=\tilde \delta _{b}-\iota _{\theta ^{\sharp }}  \label{delta_tilde_theta}\\
&=\sum_i-\iota _{e_i}\tilde \nabla_{e_i}^\theta -2\iota _{\theta ^{\sharp }}
\end{split}
\end{equation}
is the adjoint of $\tilde{d}_{b,\theta }$.

Let $v$ and $w$ be transverse basic vector fields and let $\alpha$ be a basic
differential form. We define the Clifford product
\[
v\cdot \alpha :=v^{\flat }\wedge \alpha -\iota _v\alpha .
\]
We remark that $v\cdot v\cdot \alpha=-\left\| v \right\|^2_g \alpha,$ where
$\left\| v \right\|_g:=\sqrt{g\left(v,v \right)}$.

We define the corresponding Dirac-type operator $\tilde D_{b,\theta }$ using (\ref{d_tilde_theta}) and (\ref{delta_tilde_theta})
\begin{equation*}
\begin{split}
\tilde D_{b,\theta } &:=\tilde d_{b,\theta }+\tilde \delta _{b,\theta } \\
&=\sum_i( e^i\wedge \tilde \nabla _{e_i}^\theta -\iota_{e_i}\tilde \nabla
_{e_i}^\theta ) -2\iota _{\theta ^{\sharp }} \\
\ &=\sum_ie_i\cdot \tilde \nabla _{e_i}^\theta -2\iota _{\theta ^{\sharp }}.
\end{split}
\end{equation*}

As in the classical case, a Laplace-type operator related to $\tilde
d_{b,\theta }$ can be defined
\[
\tilde \Delta _{b,\theta } :=\tilde d_{b,\theta }\tilde \delta _{b,\theta }+\tilde \delta _{b,\theta}\tilde d_{b,\theta }. \nonumber
\]

\begin{remark}
$\tilde \Delta _{b,\theta }$ is a transverse
elliptic operator defined on the Riemannian foliation, with the same symbol
as $\Delta _b$.

For $\theta =0$, we obtain the twisted operator $\tilde \Delta _b$ employed
in \cite{Hab-Ric}; furthermore, for $\theta =-\frac 12\kappa $ we actually
obtain the basic Laplace operator $\Delta _b$ (see \emph{e.g.}  \cite{Ric-Park, To}).
\end{remark}

$( \Omega _b ( \mathcal{F} ) , \tilde d_{b,\theta }) $ is a transverse elliptic complex and hence, similarly to \cite[Proposition 2.3]{Hab-Ric}
(for the classical case when the manifold is foliated by points see \emph{e.g.}  \cite{Gil, Vais2}), the following Hodge-type
decomposition holds:

\begin{theorem}{\rm (\cite{Ida-Pop})}
The basic cohomology $\Omega _b(\mathcal{F})$ can be written as a direct sum
\[
\Omega _b(\mathcal{F})=\mathrm{Im}( \tilde d_{b,\theta }) \oplus
\mathrm{Im}( \tilde \delta _{b,\theta }) \oplus \mathrm{Ker}( \tilde \Delta _{b,\theta }) .
\]
\end{theorem}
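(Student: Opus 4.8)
The plan is to reproduce the classical Hodge argument, transferring the entire analytic burden onto the transverse elliptic, formally self-adjoint operator $\tilde{\Delta}_{b,\theta}$. Two ingredients are needed: a purely formal part (mutual $L^2$-orthogonality of the three summands together with the identification of the harmonic space), and an analytic part (the existence of a Green operator for $\tilde{\Delta}_{b,\theta}$ on the complex of basic forms). I expect only the latter to present a genuine difficulty.

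First I would record the algebraic facts. Since both $\theta$ and $\kappa_b$ are closed, $\tfrac12\kappa+\theta$ is closed and hence $\tilde{d}_{b,\theta}^2=0$; dualizing with $\tilde{\delta}_{b,\theta}=\tilde{d}_{b,\theta}^{*}$ gives $\tilde{\delta}_{b,\theta}^2=0$. For any basic form $\alpha$ one then computes
\[
\langle \tilde{\Delta}_{b,\theta}\alpha,\alpha\rangle=\|\tilde{d}_{b,\theta}\alpha\|^2+\|\tilde{\delta}_{b,\theta}\alpha\|^2,
\]
so that $\mathrm{Ker}(\tilde{\Delta}_{b,\theta})=\mathrm{Ker}(\tilde{d}_{b,\theta})\cap\mathrm{Ker}(\tilde{\delta}_{b,\theta})$, \emph{i.e.} a basic form is harmonic precisely when it is both closed and coclosed. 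The three subspaces are then pairwise orthogonal: using adjointness and nilpotency, $\langle \tilde{d}_{b,\theta}\alpha,\tilde{\delta}_{b,\theta}\beta\rangle=\langle \tilde{d}_{b,\theta}^2\alpha,\beta\rangle=0$, while any harmonic $\gamma$ satisfies $\langle\gamma,\tilde{d}_{b,\theta}\alpha\rangle=\langle\tilde{\delta}_{b,\theta}\gamma,\alpha\rangle=0$ and likewise for the coimage. This already exhibits the right-hand side as an internal orthogonal direct sum.

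The analytic heart is to show that these pieces exhaust $\Omega_b(\mathcal{F})$. Here I would invoke the transverse Hodge theory available on a closed manifold carrying a Riemannian foliation with a bundle-like metric of basic mean curvature. By the Remark preceding the statement, $\tilde{\Delta}_{b,\theta}$ is transverse elliptic with the same principal symbol as the basic Laplacian $\Delta_b$, and it is formally self-adjoint by Lemma \ref{adj_div_bar} together with the explicit formulas for $\tilde{d}_{b,\theta}$ and $\tilde{\delta}_{b,\theta}$. Consequently the basic Hodge theory established for $\Delta_b$ (El~Kacimi-Alaoui, as used in \cite{Hab-Ric}) applies verbatim: $\mathrm{Ker}(\tilde{\Delta}_{b,\theta})$ is finite-dimensional, $\mathrm{Im}(\tilde{\Delta}_{b,\theta})$ is closed, and there is a bounded Green operator $G$ commuting with $\tilde{d}_{b,\theta}$ and $\tilde{\delta}_{b,\theta}$ with $\mathrm{Id}=H+\tilde{\Delta}_{b,\theta}G$, where $H$ is the orthogonal projection onto harmonic basic forms.

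Finally I would assemble the decomposition. For an arbitrary basic form $\alpha$, the Green operator identity yields
\[
\alpha=H\alpha+\tilde{d}_{b,\theta}\big(\tilde{\delta}_{b,\theta}G\alpha\big)+\tilde{\delta}_{b,\theta}\big(\tilde{d}_{b,\theta}G\alpha\big),
\]
displaying $\alpha$ as the sum of a harmonic form, an element of $\mathrm{Im}(\tilde{d}_{b,\theta})$, and an element of $\mathrm{Im}(\tilde{\delta}_{b,\theta})$; the orthogonality established in the second step then guarantees uniqueness, completing the proof. The main obstacle is exactly the transverse-elliptic Hodge theory quoted in the third step: proving finite-dimensionality of the harmonic space and closedness of the range requires the non-trivial analytic machinery for basic (transversally elliptic) complexes, and it is precisely here that the closedness of $M$ and the basic mean curvature assumption, which secures formal self-adjointness, are indispensable.
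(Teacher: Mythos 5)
Your proposal is correct and follows essentially the same route as the paper: the paper does not prove this statement itself but cites \cite{Ida-Pop} and justifies it by observing that $(\Omega_b(\mathcal{F}),\tilde d_{b,\theta})$ is a transverse elliptic complex, so the basic Hodge theory underlying \cite[Proposition 2.3]{Hab-Ric} applies. Your write-up simply fills in the standard details of that citation --- pairwise orthogonality via adjointness and nilpotency, then the Green operator from transverse elliptic theory to show the three pieces exhaust $\Omega_b(\mathcal{F})$ --- which is exactly the argument the paper is implicitly invoking.
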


If we denote $\mathcal{H}^p ( \tilde \Delta _{b,\theta })
:=\mathrm{Ker}\tilde \Delta _{b,\theta \mid \Omega _b(\mathcal{F})}$, with
$0\le p\le n $, then
\[
\mathcal{H}^p( \Delta _\theta ) \simeq \tilde H_{b,\theta}^p ( \mathcal{F})
=H_{b,\frac 12\kappa +\theta }^p( \mathcal{F}) .
\]

Now, concerning the twisted Bott connection and the Clifford product, we have
\begin{lemma}
\label{Leibniz}The following Leibniz rule holds:
\[
\tilde \nabla _v^\theta ( w\cdot \alpha ) =\nabla _vw\cdot \alpha
+w\cdot \tilde \nabla _v^\theta \alpha .
\]
\end{lemma}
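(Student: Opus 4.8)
The plan is to reduce everything to the untwisted Bott connection $\nabla$ and then track the single scalar twist. First I would record how $\tilde \nabla^\theta$ acts on a basic form $\alpha$. Since $\frac12\kappa+\theta$ is a basic transverse one-form, expanding it in the orthonormal coframe as $\sum_i(\frac12\kappa+\theta)(e_i)e^i$ and comparing (\ref{d_tilde_theta}) with $d_{b,\frac12\kappa+\theta}=d_b-(\frac12\kappa+\theta)\wedge$ shows that on forms
\[
\tilde \nabla^\theta_v\alpha=\nabla_v\alpha-(\tfrac12\kappa+\theta)(v)\,\alpha .
\]
The crucial observation is that the twist enters as multiplication by the same scalar $c:=(\frac12\kappa+\theta)(v)$ whether $\tilde \nabla^\theta_v$ acts on sections of $Q$ or on basic forms. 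I would also note at the outset that, $v$ being basic and $w$ transverse basic, all the objects below stay inside the basic complex by Lemma \ref{nabla_basic}, so every term encountered is again a basic form.

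Next I would prove the Leibniz rule for the untwisted connection, namely
\[
\nabla_v(w\cdot\alpha)=(\nabla_v w)\cdot\alpha+w\cdot\nabla_v\alpha .
\]
Writing $w\cdot\alpha=w^\flat\wedge\alpha-\iota_w\alpha$ from the definition of the Clifford product, this splits into two pieces. For the wedge term I would use that $\nabla$ is a derivation on the exterior algebra together with metric compatibility, $\nabla_v(w^\flat)=(\nabla_v w)^\flat$, to get $\nabla_v(w^\flat\wedge\alpha)=(\nabla_v w)^\flat\wedge\alpha+w^\flat\wedge\nabla_v\alpha$. For the interior-product term I would use that $\nabla$, being the canonical extension of the Bott connection, commutes with the contraction pairing $Q\times Q^*\to\mathbb{R}$, giving $\nabla_v(\iota_w\alpha)=\iota_{\nabla_v w}\alpha+\iota_w\nabla_v\alpha$. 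Subtracting, the two halves reassemble into Clifford products and yield the stated untwisted rule. This step is the main computational content, but it is entirely standard once metric compatibility is invoked.

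Finally I would reintroduce the twist. Using the form-level expression above and the untwisted Leibniz rule,
\[
\tilde \nabla^\theta_v(w\cdot\alpha)=\nabla_v(w\cdot\alpha)-c\,(w\cdot\alpha)=(\nabla_v w)\cdot\alpha+w\cdot\nabla_v\alpha-c\,(w\cdot\alpha),
\]
while on the right-hand side of the claimed identity $w\cdot\tilde \nabla^\theta_v\alpha=w\cdot(\nabla_v\alpha-c\,\alpha)=w\cdot\nabla_v\alpha-c\,(w\cdot\alpha)$. The two expressions coincide, proving the lemma. The one point that needs care, and the place where a bookkeeping error is most likely, is the asymmetric placement of the twist: the first slot carries the untwisted $\nabla_v w$ while the second carries the full $\tilde \nabla^\theta_v\alpha$. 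This is forced precisely because the scalar $c$ must be counted once, not twice; had both slots been twisted, a spurious extra $-c\,(w\cdot\alpha)$ would appear.
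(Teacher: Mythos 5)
Your proof is correct and follows essentially the same route as the paper's: expand $\tilde\nabla^\theta_v$ on forms as $\nabla_v$ minus the scalar $(\tfrac12\kappa+\theta)(v)$, apply the Leibniz rule of the untwisted connection for the Clifford product, and absorb the scalar into the second slot. The only difference is that you explicitly prove the untwisted rule $\nabla_v(w\cdot\alpha)=\nabla_vw\cdot\alpha+w\cdot\nabla_v\alpha$ by splitting the Clifford product into its wedge and interior parts, a step the paper treats as standard and leaves implicit.
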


\begin{proof} We have
\begin{equation*}
\begin{split}
\tilde \nabla _v^\theta ( w\cdot \alpha ) &=\nabla _v(
w\cdot \alpha) -(\frac 12\kappa+\theta)(v) w\cdot \alpha \\
 &=\nabla _vw\cdot \alpha +w \cdot ( \nabla _v\alpha
 -(\frac 12\kappa+\theta)(v) \alpha ),
\end{split}
\end{equation*}
and the result follows from the very definition of $\tilde \nabla _v^\theta $.
\end{proof}

\begin{lemma}
\label{interior_nabla}The following relation holds:
\[
\iota _w\tilde \nabla _v^\theta =\tilde \nabla _v^\theta \iota _w-\iota_{\nabla _vw}.
\]
\end{lemma}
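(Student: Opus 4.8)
The plan is to verify the operator identity
\[
\iota _w\tilde \nabla _v^\theta =\tilde \nabla _v^\theta \iota _w-\iota_{\nabla _vw}
\]
by peeling off the twisting term and reducing to the analogous statement for the untwisted Bott connection $\nabla$. First I would write $\tilde \nabla _v^\theta \alpha = \nabla _v\alpha - (\tfrac12\kappa+\theta)(v)\,\alpha$ on both sides. Since the scalar $(\tfrac12\kappa+\theta)(v)$ commutes with the purely algebraic operator $\iota_w$, the two correction terms $-(\tfrac12\kappa+\theta)(v)\,\iota_w\alpha$ appear identically on the left-hand side and inside the first summand on the right-hand side, so they cancel. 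Thus it suffices to establish the untwisted identity $\iota_w\nabla_v = \nabla_v\iota_w - \iota_{\nabla_v w}$, after which the twisted version follows immediately.

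For the untwisted identity the natural approach is to use that $\nabla$ acts as a derivation compatible with the contraction pairing between $Q$ and its dual. I would start from the Leibniz-type property of $\nabla$ applied to the contraction of $w\in\Gamma(Q)$ against a basic form $\alpha$, namely $\nabla_v(\iota_w\alpha) = \iota_{\nabla_v w}\alpha + \iota_w\nabla_v\alpha$; this is the standard statement that the connection commutes with the dual pairing, valid because $\nabla$ is metric and the contraction $\iota_w$ is the metric-induced pairing with $w$. Rearranging this equation gives exactly $\iota_w\nabla_v\alpha = \nabla_v\iota_w\alpha - \iota_{\nabla_v w}\alpha$, which is the desired relation as an identity of operators on basic forms. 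Here Lemma \ref{nabla_basic} guarantees that $\nabla_v$ preserves basicness, so the operators are genuinely defined on the basic complex.

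Alternatively, and perhaps more transparently, I would verify the identity degree by degree, first checking it on basic one-forms where $\iota_w\beta = g(w,\beta^\sharp)$ reduces to the metric property of $\nabla$, and then extending to arbitrary basic $u$-forms written in the local coframe $\{e^{i_1}\wedge\cdots\wedge e^{i_u}\}$ as in the proof of Lemma \ref{nabla_basic}. Because $\iota_w$ is an antiderivation and $\nabla_v$ is a derivation of the exterior algebra, both sides act compatibly with wedge products, so the general case follows from the one-form case by induction on $u$. The main obstacle, though quite mild, is purely bookkeeping: one must keep careful track of the fact that $\nabla_v w$ is again a section of $Q$ (so that $\iota_{\nabla_v w}$ makes sense as an interior product) and that $\nabla_v$ genuinely commutes with the metric contraction — both of which rest on the Bott connection being metric and on $w$ being transverse. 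Once these are in place the computation is routine, and the twisted correction contributes nothing beyond the cancellation noted above.
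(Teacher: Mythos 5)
Your proposal is correct and follows essentially the same route as the paper: decompose $\tilde\nabla_v^\theta=\nabla_v-(\tfrac12\kappa+\theta)(v)$, note that the scalar factor commutes with $\iota_w$, and invoke the standard Leibniz compatibility $\iota_w\nabla_v=\nabla_v\iota_w-\iota_{\nabla_vw}$ of the (extended) Bott connection with interior product. The paper's proof is exactly this three-line computation, with the untwisted identity used implicitly where you spell it out.
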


\begin{proof} We can write
\begin{equation*}
\begin{split}
\iota _w\tilde \nabla _v^\theta &=\iota _w ( \nabla _v-(\frac 12\kappa+\theta)(v)) \\
&=\nabla _v\iota _w-\iota _{\nabla _vw}-(\frac 12\kappa+\theta)(v) \iota _w \\
&=\tilde \nabla _v^\theta \iota _w-\iota _{\nabla _vw}.
\end{split}
\end{equation*}
\end{proof}

The following two equations relating standard operators on Riemannian
foliations are the natural extension of classical results from calculus on
differentiable manifolds. The proofs are similar to the classical case.

\begin{lemma}\label{inter_prod_Clifford}
For any basic vector fields $v$, $w$ and basic
form $\alpha \in \Omega _b(\mathcal{F})$, we have
\begin{equation}\label{prop_calcul_class}
\begin{split}
\iota _w( v\cdot \alpha )& =\iota _wv^{\flat }\,\alpha -v\cdot
\iota _w\alpha ,   \\
\mathcal{L}_v\alpha -\nabla _v\alpha &=\sum_ie^i\wedge \iota _{\nabla_{e_i}v}\alpha .
\end{split}
\end{equation}
\end{lemma}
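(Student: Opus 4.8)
The plan is to prove the two identities in \eqref{prop_calcul_class} by direct computation, treating them essentially as transverse analogues of the standard formulas on a Riemannian manifold, since all operators involved ($\iota$, $\wedge$, $\cdot$, $\nabla$, $\mathcal{L}$) restrict to the basic complex by Lemma \ref{nabla_basic} and Remark \ref{btrans}. For the first identity, I would start from the definition $v\cdot\alpha = v^{\flat}\wedge\alpha - \iota_v\alpha$, apply $\iota_w$ to both terms, and use the anti-derivation property of the interior product together with the elementary relation $\iota_w(v^{\flat}\wedge\alpha) = (\iota_w v^{\flat})\,\alpha - v^{\flat}\wedge\iota_w\alpha$. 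Recombining $-v^{\flat}\wedge\iota_w\alpha + \iota_v\iota_w\alpha = -(v^{\flat}\wedge - \iota_v)\iota_w\alpha = -v\cdot\iota_w\alpha$ (here one uses $\iota_w\iota_v = -\iota_v\iota_w$) yields exactly $\iota_w(v\cdot\alpha) = (\iota_w v^{\flat})\,\alpha - v\cdot\iota_w\alpha$.

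For the second identity, I would expand both sides on a local basic coframe. The Cartan formula gives $\mathcal{L}_v = d\,\iota_v + \iota_v\,d$, and I would use the standard intrinsic expression of $d$ in terms of the torsion-free connection $\nabla$, namely $d\alpha = \sum_i e^i\wedge\nabla_{e_i}\alpha$ on basic forms (the Bott connection is metric and torsion-free on $Q$, so this is the transverse version of the usual formula). Writing $\mathcal{L}_v\alpha$ in terms of $\nabla$ and collecting the terms in which the connection falls on $v$ rather than on $\alpha$ should produce the curvature-free correction term $\sum_i e^i\wedge\iota_{\nabla_{e_i}v}\alpha$. Since $v$ is assumed basic and transverse, $\nabla_{e_i}v\in\Gamma(Q)$ is again basic by Lemma \ref{nabla_basic}, so every term stays in $\Omega_b(\mathcal{F})$ and the computation is legitimate.

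The main subtlety I anticipate is bookkeeping rather than conceptual difficulty: one must be careful that the transverse derivative $d_b$ and the Bott connection $\nabla$ interact exactly as $d$ and the Levi-Civita connection do on an ordinary manifold, which relies on $\nabla$ being torsion-free and on the local identification of a Riemannian foliation with a Riemannian submersion. The cleanest route is to verify the identity pointwise at an arbitrary $x\in M$ using a synchronous (normal) basic frame $\{e_i\}$, i.e.\ one with $\nabla_{e_i}e_j = 0$ at $x$, so that most connection terms drop and only the derivative of $v$ survives. Because the statement is tensorial in $\alpha$ (both sides are $\mathbb{R}$-linear first-order operators agreeing on the generating forms $e^{i}$ and on functions), it then suffices to check it on a decomposable basic form $\alpha = e^{i_1}\wedge\cdots\wedge e^{i_u}$ and extend by the Leibniz rule, which is the only place a short induction is needed.
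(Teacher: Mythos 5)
Your proposal is correct: both identities follow exactly from the direct computations you describe, and the restriction to the basic complex works as you argue (via Lemma \ref{nabla_basic}, Remark \ref{btrans}, and the identity $d_b=\sum_i e^i\wedge\nabla_{e_i}$ on basic forms, which is consistent with the paper's equation \eqref{d_tilde_theta}). The paper gives no written proof of this lemma --- it only remarks that \emph{the proofs are similar to the classical case} --- and your argument is precisely that classical computation, so you are taking essentially the same (implicit) route as the authors.
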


\subsection{The twisted basic curvature operator}\label{3.3}
In this subsection we show that the curvature operator associated to the twisted Bott connection
coincides in fact with the basic curvature operator and depends only on the transverse metric.

Let $\gamma $ be a closed basic one-form  and denote by
$R_{v,w}^\gamma $ the basic curvature operator written using the
connection $\nabla ^\gamma $ and the transverse basic vector fields $v$, $w$. We have the following useful relation:

\begin{lemma}
\label{curbura}The curvature operator $R_{v,w}^\gamma $ does not depend on $\gamma $, namely
\[
R_{v,w}^\gamma =R_{v,w}.
\]
\end{lemma}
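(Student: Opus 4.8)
The plan is to compute the curvature operator $R^\gamma_{v,w}$ directly from its definition and to show that the $\gamma$-dependent terms cancel identically. Recall that for the twisted connection $\nabla^\gamma_u = \nabla_u - \gamma(u)$ (acting on sections of $Q$, extended to basic forms), the curvature is
\[
R^\gamma_{v,w} = \nabla^\gamma_v \nabla^\gamma_w - \nabla^\gamma_w \nabla^\gamma_v - \nabla^\gamma_{[v,w]}.
\]
First I would substitute $\nabla^\gamma_u = \nabla_u - \gamma(u)$ into each of the three terms and expand, keeping track of which factors are scalar multiplication operators (the $\gamma(u)$ terms) and which are genuine differential operators. Since $\gamma(v)$, $\gamma(w)$ are (basic) functions, multiplication by them commutes with multiplication by any other function, so the products $\gamma(v)\gamma(w)$ coming from $\nabla^\gamma_v\nabla^\gamma_w$ and from $\nabla^\gamma_w\nabla^\gamma_v$ are symmetric in $v,w$ and cancel in the commutator.

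The key step is to collect the remaining first-order mixed terms. Expanding $\nabla^\gamma_v\nabla^\gamma_w - \nabla^\gamma_w\nabla^\gamma_v$ produces, besides the untwisted commutator $\nabla_v\nabla_w - \nabla_w\nabla_v$, the cross terms
\[
-\nabla_v\big(\gamma(w)\,\cdot\big) - \gamma(v)\nabla_w + \nabla_w\big(\gamma(v)\,\cdot\big) + \gamma(w)\nabla_v.
\]
Here $\nabla_v(\gamma(w)\,\cdot)$ must be handled as a composition: by the Leibniz rule for $\nabla$ it equals $v(\gamma(w))\,\cdot + \gamma(w)\nabla_v$, so the terms $+\gamma(w)\nabla_v$ and $+\gamma(v)\nabla_w$ coming from the product rule cancel against the standalone $-\gamma(v)\nabla_w$ and $+\gamma(w)\nabla_v$, leaving only the scalar factor
\[
-v(\gamma(w)) + w(\gamma(v)).
\]
Finally I would subtract the $\nabla^\gamma_{[v,w]}$ term, which contributes $+\gamma([v,w])$. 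The surviving $\gamma$-dependent scalar is therefore
\[
-v(\gamma(w)) + w(\gamma(v)) + \gamma([v,w]),
\]
which is precisely $-\,d\gamma(v,w)$. Since $\gamma$ is closed, $d\gamma = 0$, so this term vanishes and $R^\gamma_{v,w} = R_{v,w}$.

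The main obstacle is bookkeeping rather than conceptual: one must distinguish carefully between the operator $\nabla_v$ applied to the composite $\gamma(w)\nabla_w(\cdot)$ and the naive product, applying the Leibniz rule at the right moment so that the $\gamma(v)\nabla_w$ and $\gamma(w)\nabla_v$ terms pair off correctly. The only structural input beyond this is the closedness of $\gamma$, which is exactly the hypothesis; the standard formula $d\gamma(v,w) = v(\gamma(w)) - w(\gamma(v)) - \gamma([v,w])$ then finishes the argument. I would also remark that since $R_{v,w}$ is the untwisted Bott curvature, which depends only on the transverse metric, the independence of $\gamma$ immediately yields the claimed dependence only on the transverse structure.
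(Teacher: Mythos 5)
Your proposal is correct and follows essentially the same route as the paper: a direct expansion of $R^\gamma_{v,w}=\nabla^\gamma_v\nabla^\gamma_w-\nabla^\gamma_w\nabla^\gamma_v-\nabla^\gamma_{[v,w]}$, cancellation of the symmetric $\gamma(v)\gamma(w)$ and first-order cross terms via the Leibniz rule, and the closedness identity $v(\gamma(w))-w(\gamma(v))=\gamma([v,w])$ to kill the remaining scalar term. The only cosmetic difference is that you package that residue as $-\,d\gamma(v,w)$, whereas the paper invokes the same identity directly.
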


\begin{proof} Starting with the definition of the curvature operator, we obtain
for $R_{v,w}^\gamma $
\begin{equation}
R_{v,w}^\gamma =\nabla _v^\gamma \nabla _w^\gamma -\nabla _w^\gamma \nabla_v^\gamma -\nabla _{[v,w]}^\gamma .  \label{gamma_curvature}
\end{equation}
Furthermore, we compute:
\begin{equation*}
\begin{split}
\nabla _v^\gamma \nabla _w^\gamma &=( \nabla _v-\gamma(v) ) ( \nabla _w-\gamma(w)) \\
&=\nabla _v\nabla _w-\gamma(v) \nabla _w-v(\gamma(w)) \\
&-\gamma (w)\nabla(v)+\gamma(v)\gamma(w).
\end{split}
\end{equation*}
Similarly, we have:
\begin{equation*}
\begin{split}
\nabla _w^\gamma \nabla _v^\gamma &=( \nabla _w-\gamma(w) ) ( \nabla _v-\gamma(v)) \\
&=\nabla _w\nabla _v-\gamma(w) \nabla _v-w(\gamma(v)) \\
&-\gamma (v)\nabla(w)+\gamma(w)\gamma(v).
\end{split}
\end{equation*}
and
\[
\nabla _{[v,w]}^\gamma =\nabla _{[v,w]}-\gamma([v,w]) .
\]
Now, as $d\gamma=0 $, we have:
\begin{equation}
v(\gamma(w)-w(\gamma(v))=\gamma([v,w]) .  \label{closed}
\end{equation}

The conclusion follows.
\end{proof}

\begin{remark}\label{curvature}
As $\tilde R^\theta =R^{\frac 12\kappa +\theta }$, we also have $\tilde
R^\theta =R$.
\end{remark}

\subsection{A twisted basic Weitzenb\"ock formula}\label{3.4}
We present now the Weitzenb\"ock-type formula for the Laplace operator
$\tilde \Delta _{b,\theta }$, canonically constructed by using the
derivative $\tilde d_{b,\theta }$ on $\Omega _b\left( \mathcal{F}\right) $.

Note first that
\begin{equation}\label{trei termeni}
\begin{split}
\tilde \Delta _{b,\theta }&=\tilde{D}^2_{b,\theta}\\
&=\sum_{i,j}( e_i\cdot \tilde \nabla _{e_i}^\theta ) (e_j\cdot \tilde \nabla _{e_j}^\theta )
-2\sum_i\iota _{\theta ^{\sharp}}( e_i\cdot \tilde \nabla _{e_i}^\theta )   \\
&-2\sum_i( e_i\cdot \tilde \nabla _{e_i}^\theta ) \iota_{\theta ^{\sharp }}
+4\iota _{\theta ^{\sharp }}\iota _{\theta ^{\sharp }}.
\end{split}
\end{equation}
As the last term vanishes, we compute below the other three terms.

For the first one, using Lemma \ref{Leibniz}, we get
\begin{equation*}
\begin{split}
 \sum_{i,j}( e_i\cdot \tilde \nabla _{e_i}^\theta )(e_j\cdot \tilde \nabla _{e_j}^\theta )
& =\sum_{i,j}e_i\cdot \nabla_{e_i}e_j\cdot \tilde \nabla _{e_j}^\theta \\
&+\sum_{i,j}e_i\cdot e_j\cdot \tilde \nabla _{e_i}^\theta \cdot
\tilde \nabla _{e_j}^\theta .
\end{split}
\end{equation*}
Let $\Gamma _{ij}^k$ be the Christoffel coefficients given by
$\nabla _{e_i}e_j=\sum_k\Gamma _{ij}^ke_k$, for the local orthonormal
frame $\{e_i\}_{1\le i\le q}$. As $e_i$ are basic,  $\Gamma_{ij}^k$ are basic functions. Then:

\begin{equation*}
\begin{split}
\sum_{i,j}e_i\cdot \nabla _{e_i}e_j\cdot \tilde \nabla _{e_j}^\theta
&=\sum_{i,j,k}e_i\cdot -\Gamma _{ik}^je_k\cdot \tilde \nabla _{e_j}^\theta \\
&=-\sum_{i,k}e_i\cdot e_k\cdot \tilde \nabla _{\nabla _{e_i}e_k}^\theta ,
\end{split}
\end{equation*}
and we obtain
\begin{equation}\label{weitz_curvature_term}
\begin{split}
\sum_{i,j}( e_i\cdot \tilde \nabla _{e_i}^\theta )
&(e_j\cdot \tilde \nabla _{e_j}^\theta ) =\sum_i \tilde \nabla _{\nabla _{e_i}e_i}^\theta
-\sum_i\tilde \nabla _{e_i}^\theta \tilde \nabla_{e_i}^\theta   \\
&+\sum_{i<j}e_i\cdot e_j\cdot ( \tilde \nabla _{e_i}^\theta
\tilde \nabla _{e_j}^\theta -\tilde \nabla _{e_j}^\theta \tilde \nabla_{e_i}^\theta
-\tilde \nabla _{_{\nabla _{e_i}e_j}-_{\nabla_{e_j}e_i}}^\theta ) .
\end{split}
\end{equation}
As
\[
\sum_i\nabla_{e_i}e_i=-\sum_i g( \nabla _{e_i}e_j,e_i)e_j=-\sum_i \mathrm{div}^\nabla e_i \,e_i,
\]
the `rough' Laplace operator $(\tilde \nabla^\theta)^2$ of the basic twisted connection satisfies the equation:
\begin{equation*}
\begin{split}
(\tilde \nabla^\theta)^2 &=\sum_i\tilde \nabla _{\nabla _{e_i}e_i}^\theta
-\sum_i\tilde \nabla_{e_i}^\theta \tilde \nabla _{e_i}^\theta \\
&=\sum_i-\mathrm{div}^\nabla \,e_i\tilde \nabla _{e_i}^\theta -\sum_i\tilde
\nabla _{e_i}^\theta \tilde \nabla _{e_i}^\theta \\
&=\sum_i\tilde \nabla _{e_i}^{\theta *}\tilde \nabla _{e_i}^\theta
+2\sum_i \theta(e_i) \tilde \nabla _{e_i}^\theta \quad\text{by Lemma \ref{adj_div_bar}}.
\end{split}
\end{equation*}
Observe that, according to the definition of the twisted Bott connection, in (\ref
{weitz_curvature_term}) we do not have yet a basic curvature operator
associated to $\tilde \nabla ^\theta $. However, using Remark \ref{tilde_parallel}, we obtain
\[
\tilde \nabla _{\pi _{T\mathcal{F}}([e_i,e_j])}^\theta \alpha =0 ,
\]
for any $\alpha \in \Omega _b(\mathcal{F})$.
Then, using also Remark \ref{curvature}, we denote
\begin{equation*}
\begin{split}
\mathcal{R} &:=\sum_{i<j}e_i\cdot e_j\cdot( \tilde \nabla_{e_i}^\theta \tilde \nabla _{e_j}^\theta
-\tilde \nabla _{e_j}^\theta
\tilde \nabla _{e_i}^\theta -\tilde \nabla _{_{\nabla _{e_i}e_j}-_{\nabla_{e_j}e_i}}^\theta ) \\
&=\sum_{i<j}e_i\cdot e_j\cdot \tilde R_{e_{i,}e_j}^\theta \\
&=\sum_{i<j}e_i\cdot e_j\cdot R_{e_{i,}e_j}.
\end{split}
\end{equation*}
With this, we finally obtain
\begin{equation}
\sum_{i,j}( e_i\cdot \tilde \nabla _{e_i}^\theta ) (e_j\cdot \tilde \nabla _{e_j}^\theta )
=\sum_i\tilde \nabla_{e_i}^{\theta *}\tilde \nabla _{e_i}^\theta
+2\sum_i \theta(e_i) \tilde \nabla _{e_i}^\theta +\mathcal{R} . \label{termen 1}
\end{equation}
As for the second term in (\ref{trei termeni}), using Lemma \ref
{inter_prod_Clifford}, we get
\begin{equation}
\begin{split}\label{termen 2}
\sum_i\iota _{\theta ^{\sharp }}( e_i\cdot \tilde \nabla _{e_i}^\theta)
&=\sum_i\iota _{\theta ^{\sharp }}e^i\,\tilde \nabla _{e_i}^\theta
-\sum_ie_i\cdot \iota _{\theta ^{\sharp }}\tilde \nabla _{e_i}^\theta\\
 &=\sum_i \theta(e_i) \tilde \nabla _{e_i}^\theta
-\sum_ie_i\cdot \iota _{\theta ^{\sharp }}\tilde \nabla _{e_i}^\theta .
\end{split}
\end{equation}
For the third term we use Lemma \ref{interior_nabla}:
\begin{equation}
\sum_i( e_i\cdot \tilde \nabla _{e_i}^\theta ) \iota _{\theta^{\sharp }}
=\sum_ie_i\cdot \iota _{\theta ^{\sharp }}\tilde \nabla_{e_i}^\theta
+\sum_ie_i\cdot \iota _{\nabla _{e_i}\theta ^{\sharp }}.
\label{termen 3}
\end{equation}
Plugging equations \eqref{termen 1}-\eqref{termen 3} in the relation
(\ref{trei termeni}), we end up with the corresponding version of Weitzenb\"ock
formula for the Laplace-type operator $\tilde \Delta _{b,\theta }$:
\begin{equation}\label{basic_Weitzenbock formula}
\begin{split}
\tilde \Delta _{b,\theta } &=\sum_i\tilde \nabla _{e_i}^{\theta *}\tilde
\nabla _{e_i}^\theta +2\sum_i \theta(e_i) \tilde
\nabla _{e_i}^\theta +\mathcal{R}
-2\sum_i \theta(e_i)  \tilde \nabla _{e_i}^\theta   \\
&+2\sum_ie_i\cdot \iota _{\theta ^{\sharp }}\tilde \nabla _{e_i}^\theta
-2\sum_ie_i\cdot \iota _{\theta ^{\sharp }}\tilde \nabla _{e_i}^\theta
-2\sum_ie_i\cdot \iota _{\nabla _{e_i}\theta ^{\sharp }} \\
&=\sum_i\tilde \nabla _{e_i}^{\theta *}\tilde \nabla_{e_i}^\theta
-2\sum_ie_i\cdot \iota _{\nabla _{e_i}\theta ^{\sharp}}+\mathcal{R}.
\end{split}
\end{equation}

\begin{remark}
\label{remark_weitz}In general, it is difficult to obtain a convenient basic
Weitzenb\"ock formula. First of all, if $\theta =0$, then we obtain the
classical form of such formula for $\tilde \Delta _b$ (for the particular
case when the mean curvature $\kappa $ is also harmonic, see also \cite
{Hab-Ric}). Secondly, as we noticed before, for the particular case when
$\theta =-1/2\cdot\kappa $, we obtain the classical basic Laplace operator, \cite{Ric-Park}.
Assuming that $\kappa $ is not only basic, but also parallel with respect to
the Bott connection (with the corresponding topological consequences), we
obtain again a standard form for \eqref{basic_Weitzenbock formula}.
\end{remark}

In the next section we work out the Bochner technique for the basic
Morse-Novikov cohomology, deriving corresponding conditions that imply the
vanishing of the cohomology groups.

\section{Vanishing results for the basic Morse-Novikov cohomology}\label{4}

In this section we investigate several situations when the groups of
basic Morse-Novikov cohomology vanish.

We start by extending to our context a result from \cite{Leon}, where
the authors prove that if the closed form $\theta $ is also parallel, then
the Morse-Novikov cohomology becomes trivial.

We shall need the following commutation formulae, written using the operator $\tilde\nabla$. Recall that $\tilde\nabla=\nabla^{\frac 12 \kappa}=\tilde\nabla^0$.

\begin{lemma}  If the closed one-form $\theta$ is parallel with respect to the Bott connection,
then the following equations are satisfied:
$$[\tilde\nabla_{\theta^\sharp},\tilde d_{b,\theta}]=0,\qquad [\tilde\nabla_{\theta^\sharp},\tilde \delta_{b,\theta}]=0.$$
\end{lemma}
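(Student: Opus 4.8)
The plan is to trade the covariant derivative $\tilde\nabla_{\theta^\sharp}=\tilde\nabla^0_{\theta^\sharp}$ for a Lie derivative, since the latter commutes with exterior-type differentials essentially for free. Writing $X:=\theta^\sharp$, the hypothesis that $\theta$ is parallel means $\nabla_v X=0$ for every $v$; feeding this into the second identity of Lemma \ref{inter_prod_Clifford} gives $\mathcal L_X\alpha-\nabla_X\alpha=\sum_i e^i\wedge\iota_{\nabla_{e_i}X}\alpha=0$, so that $\mathcal L_X=\nabla_X$ on basic forms. Since $\tilde\nabla^0_X=\nabla_X-\tfrac12\kappa(X)$ and $\kappa(X)=g(\kappa^\sharp,\theta^\sharp)$ is a basic function, I can rewrite $\tilde\nabla_{\theta^\sharp}=\mathcal L_X-\tfrac12\kappa(X)$ as an operator on $\Omega_b(\mathcal F)$ (recall $X$ is foliate, so $\mathcal L_X$ preserves basic forms).

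Next I would split the differential as $\tilde d_{b,\theta}=d_{b,\theta}-\tfrac12\kappa\wedge$ (recall $\tilde d_{b,\theta}=d_{b,\frac12\kappa+\theta}$) and expand the commutator by bilinearity into four pieces:
\[
[\tilde\nabla_{\theta^\sharp},\tilde d_{b,\theta}]=[\mathcal L_X,d_{b,\theta}]-[\mathcal L_X,\tfrac12\kappa\wedge]-[\tfrac12\kappa(X),d_{b,\theta}]+[\tfrac12\kappa(X),\tfrac12\kappa\wedge].
\]
The first bracket vanishes by Cartan's formula together with $\mathcal L_X\theta=\nabla_X\theta=0$, which shows that $\mathcal L_X$ commutes with both $d_b$ and $\theta\wedge$; the last bracket vanishes because two multiplication operators commute.

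The crux is the cancellation of the two middle brackets, and this is where I expect the main difficulty to lie. Because $\kappa$ is closed, Cartan's formula gives $\mathcal L_X\kappa=d(\iota_X\kappa)=d(\kappa(X))$, whence $[\mathcal L_X,\tfrac12\kappa\wedge]\alpha=\tfrac12 d(\kappa(X))\wedge\alpha$; on the other hand, pulling $d_{b,\theta}$ past the multiplication operator $\tfrac12\kappa(X)$ produces $[\tfrac12\kappa(X),d_{b,\theta}]\alpha=-\tfrac12 d(\kappa(X))\wedge\alpha$. These two contributions are equal and opposite, so the entire commutator vanishes and the first identity follows. It is precisely here that the standing assumption $\kappa=\kappa_b$ (i.e.\ $\kappa$ closed) is indispensable: otherwise $\mathcal L_X\kappa$ would carry an additional $\iota_X d\kappa$ term with no counterpart to absorb it.

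Finally, I would obtain the second identity by adjunction rather than by repeating the computation. Specializing Lemma \ref{adj_div_bar} to $\theta=0$ yields $(\tilde\nabla^0_X)^{*}=-\tilde\nabla^0_X-\mathrm{div}^\nabla X$, and parallelism forces $\mathrm{div}^\nabla\theta^\sharp=\sum_i g(\nabla_{e_i}\theta^\sharp,e_i)=0$, so $(\tilde\nabla_{\theta^\sharp})^{*}=-\tilde\nabla_{\theta^\sharp}$. Taking the (basic) adjoint of the already-established relation $[\tilde\nabla_{\theta^\sharp},\tilde d_{b,\theta}]=0$ and using $\tilde\delta_{b,\theta}=\tilde d_{b,\theta}^{\,*}$ then transforms it directly into $[\tilde\nabla_{\theta^\sharp},\tilde\delta_{b,\theta}]=0$, completing the proof.
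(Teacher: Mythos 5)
Your argument is correct, and it reaches the first commutation identity by a genuinely different mechanism than the paper. The paper proves the twisted homotopy formula $\tilde\nabla_{\theta^\sharp}=\tilde d_b\iota_{\theta^\sharp}+\iota_{\theta^\sharp}\tilde d_b$ (its equation (\ref{Lie})) by a direct frame computation using parallelism, from which $[\tilde\nabla_{\theta^\sharp},\tilde d_b]=0$ follows at once from the nilpotency $\tilde d_b^{\,2}=0$; commutation with $\theta\wedge$ is then a one-line Leibniz rule, and the two facts combine to give $[\tilde\nabla_{\theta^\sharp},\tilde d_{b,\theta}]=0$. You instead identify $\tilde\nabla_{\theta^\sharp}=\mathcal{L}_{\theta^\sharp}-\tfrac12\kappa(\theta^\sharp)$ via Lemma \ref{inter_prod_Clifford} and rely on the naturality of the Lie derivative plus an explicit four-term cancellation. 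The two pivotal formulas are in fact equivalent: expanding $\tilde d_b=d_b-\tfrac12\kappa\wedge$ in (\ref{Lie}) and using $\iota_{\theta^\sharp}(\kappa\wedge\alpha)+\kappa\wedge\iota_{\theta^\sharp}\alpha=\kappa(\theta^\sharp)\,\alpha$ recovers your identity --- but the ways they are exploited differ. The paper's route hides the closedness of $\kappa$ inside $\tilde d_b^{\,2}=0$ (which is exactly where $d\kappa=0$ is used), and it has the side benefit that (\ref{Lie}) is needed again later anyway (for (\ref{Lie_tilde}) and the proof of Theorem \ref{Thm_2}); your route makes the role of $d\kappa=0$ explicit in the cancellation of $\mathcal{L}_{\theta^\sharp}\kappa=d\left(\kappa(\theta^\sharp)\right)$ against the bracket $[\tfrac12\kappa(\theta^\sharp),d_{b,\theta}]$, at the cost of the small verifications --- which you correctly supply --- that $\theta^\sharp$ is foliate, that $\mathcal{L}_{\theta^\sharp}$ preserves $\Omega_b(\mathcal{F})$, and that $\kappa(\theta^\sharp)$ is a basic function. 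For the second identity your adjunction argument (antisymmetry $\tilde\nabla_{\theta^\sharp}^{*}=-\tilde\nabla_{\theta^\sharp}$ from Lemma \ref{adj_div_bar} with vanishing transverse divergence, then passing to basic adjoints) coincides with the paper's.
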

\begin{proof}
We have

\begin{equation}\label{Lie}
\begin{split}
\tilde \nabla _{\theta ^{\sharp }} &=\sum_{i}e^i\wedge \iota _{\theta^{\sharp }}\tilde \nabla _{e_i}
+\sum_{i} \theta(e_i) \tilde \nabla _{e_i}
-\sum_{i}e^i\wedge \iota _{\theta^{\sharp }}\tilde \nabla _{e_i}   \\
&=\sum_{i}e^i\wedge \tilde \nabla _{e_i}\iota _{\theta ^{\sharp}}
+\iota _{\theta ^{\sharp }}\sum_{i}e^i\wedge \tilde \nabla _{e_i}\\
&=\tilde d_b\iota _{\theta ^{\sharp }}+\iota _{\theta ^{\sharp }}\tilde d_b,
\end{split}
\end{equation}
where we use the fact that $\theta $ is parallel.

We achieve the result in two steps. Firstly, using (\ref{Lie}) we show that
$\tilde d_{b}$, $\tilde \delta_{b}$ commute with $\tilde \nabla _{\theta^{\sharp}} $.

\begin{equation}
\tilde \nabla _{\theta ^{\sharp }}\tilde d_b =\tilde d_b\iota _{\theta^{\sharp }}\tilde d_b
=\tilde d_b( \tilde \nabla _{\theta ^{\sharp }}-\tilde d_b\iota_{\theta ^{\sharp }})
=\tilde d_b\tilde \nabla _{\theta ^{\sharp }}.
\end{equation}
From Lemma \ref{adj_div_bar}, as $\theta $ is parallel with respect to the Bott connection, we derive that
$\tilde \nabla _{\theta ^{\sharp }}^{*} =-\tilde \nabla _{\theta ^{\sharp }}$. Taking adjoint operators, we  obtain:
\[
\tilde \nabla _{\theta ^{\sharp }}\tilde \delta _b=\tilde \delta _b\tilde
\nabla _{\theta ^{\sharp }}.
\]
Using the fact that $\theta$ is parallel, the following Leibniz rule is easy to prove for the connection $\tilde\nabla$ and any $v \in \Gamma(Q)$.
\[
\tilde \nabla_v \theta \wedge=(\nabla_v \theta) \wedge+ \theta \wedge \tilde \nabla_v=\theta \wedge \tilde \nabla_v .
\]

Then, for the twisted operators, this gives:
\begin{equation}\label{commut_d_theta}
\begin{split}
\tilde \nabla _{\theta ^{\sharp }}\tilde d_{b,\theta }
&=\tilde \nabla_{\theta ^{\sharp }}\tilde d_b-\tilde \nabla _{\theta ^{\sharp }}\theta
\wedge   \\
&=( \tilde d_b-\theta \wedge ) \tilde \nabla _{\theta ^{\sharp}} \\
&=\tilde d_{b,\theta }\tilde \nabla _{\theta ^{\sharp }}.
\end{split}
\end{equation}
Again taking adjoint operators, we obtain
\begin{equation}
\tilde \nabla _{\theta ^{\sharp }}\tilde \delta _{b,\theta }
=\tilde \delta_{b,\theta ^{\sharp }}\tilde \nabla _{\theta^\sharp} .
\label{commut_delta_theta}
\end{equation}
\end{proof}

The relation \eqref{Lie} allows us to establish the link between the operators
$\tilde d_{b,\theta }$, $\iota_{\theta^{\sharp}}$ and the twisted
connection $\tilde \nabla _{\theta ^{\sharp }}$, namely
\begin{equation}\label{Lie_tilde}
\begin{split}
\tilde \nabla _{\theta ^{\sharp }}-\|\theta\|^2\mathrm{Id} &=( \tilde d_b-\theta \wedge) \iota _{\theta ^{\sharp }}
+\iota _{\theta ^{\sharp }}( \tilde d_b-\theta \wedge )   \\
&=\tilde d_{b,\theta }\iota _{\theta ^{\sharp }}+\iota _{\theta ^{\sharp}}\tilde d_{b,\theta }.
\end{split}
\end{equation}
Now we can prove:

\begin{theorem}\label{Thm_2}
Let $\left( M,\mathcal{F},g\right) $ be a Riemannian
foliation with closed manifold $M$ and basic mean curvature $\kappa $. If the basic, nontrivial
$one$\--form $\theta$ is parallel with respect to the Bott connection $\nabla$, then
\[
\tilde H_{b,\theta }^i( \mathcal{F}) =H_{b,\frac 12\kappa +\theta}^i( \mathcal{F}) =0
\]
for $0\le i\le q$, where $H_{b,\frac 12\kappa +\theta }^i$ are the basic
Morse-Novikov cohomology groups.
\end{theorem}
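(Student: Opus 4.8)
The plan is to combine the Hodge-type decomposition with the identity \eqref{Lie_tilde}: I reduce the vanishing of $\tilde H_{b,\theta}^i(\mathcal F)$ to the vanishing of $\tilde\Delta_{b,\theta}$-harmonic basic forms, and then force each such harmonic form to be zero by pairing \eqref{Lie_tilde} against the form itself. By the Hodge-type decomposition of \cite{Ida-Pop} stated above, one has $\tilde H_{b,\theta}^i(\mathcal F)\simeq\mathcal H^i(\tilde\Delta_{b,\theta})$, so it suffices to show that every $\alpha\in\mathcal H^i(\tilde\Delta_{b,\theta})$ vanishes for $0\le i\le q$.

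First I would record that a harmonic form is both closed and coclosed: since $\tilde\delta_{b,\theta}$ is the basic adjoint of $\tilde d_{b,\theta}$, the pairing $0=\langle\tilde\Delta_{b,\theta}\alpha,\alpha\rangle=\|\tilde d_{b,\theta}\alpha\|^2+\|\tilde\delta_{b,\theta}\alpha\|^2$ gives $\tilde d_{b,\theta}\alpha=0$ and $\tilde\delta_{b,\theta}\alpha=0$. Applying \eqref{Lie_tilde} to such an $\alpha$ and using $\tilde d_{b,\theta}\alpha=0$ yields
\[
\tilde\nabla_{\theta^\sharp}\alpha-\|\theta\|^2\alpha=\tilde d_{b,\theta}\iota_{\theta^\sharp}\alpha .
\]
Taking the $L^2$ inner product with $\alpha$, the right-hand term becomes $\langle\iota_{\theta^\sharp}\alpha,\tilde\delta_{b,\theta}\alpha\rangle=0$, so that $\langle\tilde\nabla_{\theta^\sharp}\alpha,\alpha\rangle=\langle\|\theta\|^2\alpha,\alpha\rangle$.

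The decisive point is that the left-hand side vanishes. As already observed in the proof of the commutation lemma, Lemma \ref{adj_div_bar} together with the parallelism of $\theta$ (whence $\mathrm{div}^\nabla\theta^\sharp=0$) gives $\tilde\nabla_{\theta^\sharp}^{*}=-\tilde\nabla_{\theta^\sharp}$; skew-adjointness of $\tilde\nabla_{\theta^\sharp}$ with respect to the real, symmetric $L^2$ product forces $\langle\tilde\nabla_{\theta^\sharp}\alpha,\alpha\rangle=0$. Hence $\int_M\|\theta\|^2\, g(\alpha,\alpha)\,d\mu_g=0$. Because $\nabla$ is a metric connection and $\theta$ is parallel, the transverse derivatives of $\|\theta\|^2$ vanish, while basicness kills the leafwise derivatives; thus $\|\theta\|^2$ is constant on $M$, and it is a strictly positive constant since $\theta$ is nontrivial. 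Therefore $g(\alpha,\alpha)\equiv0$, i.e. $\alpha=0$, which proves $\mathcal H^i(\tilde\Delta_{b,\theta})=0$ for every $0\le i\le q$ and hence the theorem.

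I expect the only genuinely delicate point to be the justification that $\|\theta\|^2$ is a strictly positive constant, which is what ultimately uses the \emph{nontriviality} hypothesis: one needs both the metric parallelism (for constancy in the transverse directions) and basicness (for constancy along the leaves). Everything else is a formal manipulation of already-established identities; in particular the skew-adjointness of $\tilde\nabla_{\theta^\sharp}$ and the identity \eqref{Lie_tilde} do all of the real work.
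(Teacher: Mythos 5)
You have given a correct proof, and it genuinely simplifies the paper's argument rather than reproducing it. Both proofs share the same skeleton: reduce to $\tilde\Delta_{b,\theta}$-harmonic forms via the Hodge-type decomposition, use the homotopy identity \eqref{Lie_tilde}, and observe that $\langle\tilde\nabla_{\theta^\sharp}\alpha,\alpha\rangle=0$ because $\tilde\nabla_{\theta^\sharp}$ is skew-adjoint when $\theta$ is parallel (Lemma \ref{adj_div_bar} with $\mathrm{div}^\nabla\theta^\sharp=0$). The divergence comes at the decisive step. The paper reads \eqref{Lie_tilde} only as the cohomological statement $[\tilde\nabla_{\theta^\sharp}\alpha]=[\alpha]$, and therefore needs the commutation relations \eqref{commut_d_theta}, \eqref{commut_delta_theta} (a separate lemma) to show that $\tilde\nabla_{\theta^\sharp}\alpha$ is again harmonic, and then uniqueness of harmonic representatives to upgrade this to the pointwise identity $\tilde\nabla_{\theta^\sharp}\alpha=\alpha$; combining with \eqref{anulare} gives $\alpha=0$. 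You instead pair \eqref{Lie_tilde}, applied to $\alpha$, directly against $\alpha$: the exact term $\tilde d_{b,\theta}\iota_{\theta^\sharp}\alpha$ dies by adjointness against the coclosed $\alpha$ (this is legitimate because $\iota_{\theta^\sharp}\alpha$ is basic, $\theta^\sharp$ being a basic transverse field, so the basic adjointness applies), leaving $\|\theta\|^2\,\|\alpha\|^2=0$ at once. This renders the commutation lemma entirely unnecessary, and it also avoids the paper's preliminary normalization $\|\theta\|=1$ via the conformal change $g'=\|\theta\|^2g$, since all you need is that $\|\theta\|^2$ is a positive constant, which you justify correctly. One implicit point, shared equally by the paper's own proof: concluding that this constant is strictly positive from ``nontrivial'' uses connectedness of $M$ (nontriviality on every component), which the paper also tacitly assumes when it rescales by $\|\theta\|^2$. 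The trade-off is that the paper's longer route establishes the commutation relations, which have some independent interest, whereas your route is shorter and uses strictly fewer ingredients.
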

\begin{proof}
Assume that the basic, nontrivial one-form $\theta $
is parallel with respect to the Bott connection $\nabla $. Similar to \cite{Leon},
without restricting the generality we can also assume that $\|\theta\|=1$, otherwise
considering the conformal transformation of the metric $g':=\|\theta\|^2g$ we
obtain the desired condition.
Let $\alpha \in \mathcal{H}^p( \tilde \Delta _{b,\theta}) $,
where, as above, $\mathcal{H}^p( \tilde \Delta _{b,\theta})
=\mathrm{Ker}\tilde \Delta _{b,\theta \mid \Omega _b(\mathcal{F})}$.
Then $\tilde d_{b,\theta }\alpha =0$, $\tilde \delta_{b,\theta }=0$,
$\alpha $ being a harmonic form associated to $\tilde \Delta_{b,\theta }$.
As $\theta $ is parallel with respect to the Bott connection, using again that
$\tilde \nabla _{\theta ^{\sharp }}^{*} =-\tilde \nabla _{\theta ^{\sharp }}$, we get
\[
\left\langle \tilde \nabla _{\theta ^{\sharp }}\alpha ,\alpha \right\rangle
=\left\langle \alpha ,-\tilde \nabla _{\theta ^{\sharp }}\alpha
\right\rangle ,
\]
so
\begin{equation}
\left\langle \tilde \nabla _{\theta ^{\sharp }}\alpha ,\alpha \right\rangle =0.  \label{anulare}
\end{equation}
Now, equation (\ref{Lie_tilde}) implies:
\[
\tilde \nabla _{\theta ^{\sharp }}\alpha -\alpha =\tilde d_{b,\theta }\left(
\iota _{\theta ^{\sharp }}\alpha \right) ,
\]
and $\left[ \tilde \nabla _{\theta ^{\sharp }}\alpha \right] \equiv \left[
\alpha \right] $ \emph{i.e.} $\tilde \nabla _{\theta ^{\sharp }}\alpha $ and $\alpha $
lie in the same cohomology class of $H_{b,\frac 12\kappa +\theta}^p $.
Using the commutation relations (\ref{commut_d_theta}) and
(\ref{commut_delta_theta}), we prove that $\tilde \nabla_{\theta ^{\sharp}}\alpha $
is also a harmonic form. Indeed,
\begin{eqnarray*}
\tilde d_{b,\theta }\tilde \nabla _{\theta ^{\sharp }}\alpha &=&\tilde
\nabla _{\theta ^{\sharp }}\tilde d_{b,\theta }\alpha =0, \\
\tilde \delta _{b,\theta }\tilde \nabla _{\theta ^{\sharp }}\alpha &=&\tilde
\nabla _{\theta ^{\sharp }}\tilde \delta _{b,\theta }\alpha =0,
\end{eqnarray*}
and as a consequence we must have $\tilde \nabla _{\theta ^{\sharp }}\alpha
=\alpha $. From (\ref{anulare}) it follows that $\alpha =0$, and hence
$\mathcal{H}^p( \tilde \Delta _{b,\theta }) \equiv 0$, which proves  the result.
\end{proof}

\begin{example}\label{ex1} We construct a Riemannian foliation endowed with a parallel
basic one-form and apply the above result.

 Consider a Hopf manifold constructed in the
following manner (see \emph{e.g.}  \cite{Drag-Orn, Vais1}).  On the  complex
manifold $\mathbb{C}^n\setminus\{0\}$ one considers the metric $g:=z\mapsto 1/\left|
z\right| ^2\cdot g_0$, where $g_0$ is the
canonical Euclidean metric, with $z:=\left( z^1,\ldots,z^n\right) $.  Let $\Delta $ be the
cyclic group  generated by the transformation $z\mapsto e^2z$.
The quotient $\mathbb{C}H:=\left( \mathbb{C}^n\setminus\{0\}\right) /\Delta $,
is a \emph{complex Hopf manifold}. Moreover, the above metric is
invariant with respect to the transformation, and a quotient metric, still denoted by $g$, is
induced on $\mathbb{C}H$. If $S^{1}(1/\pi )$ is the circle of radius $1/\pi$,
then the mapping $f:\mathbb{C}^n\setminus\{0\}\rightarrow S^{1}(1/\pi )\times S^{2n-1}$ defined as
\[
f(z):=\left( \frac 1\pi e^{i\pi \ln \left| z\right| },\frac z{\left|
z\right| }\right)
\]
is invariant with respect to the above group action, and induces an isometry
between $\mathbb{C}H$ and $S^{1}(1/\pi )\times S^{2n-1}$ \cite{Drag-Orn}. If $J$
is the complex structure, then the K\"ahler form $\omega\left( \cdot
,\cdot \right) :=g\left( \cdot ,J\cdot \right) $ has the expression
\[
\omega =-i\frac 1{2\left| z\right| ^2}\sum_jdz^j\wedge d\bar z^j.
\]
One easily sees that $\omega$ satisfies the equation (see Section \ref{l.c.s._l.c.K.})
\[
d\omega =\theta \wedge \omega ,\quad \text{with}\quad \theta=-\frac 1{\left| z\right| ^2}\sum_j\left(z^jd\bar z^j+\bar z^jdz^j\right).
\]

This $\theta$ is called {\em Lee form} and, in this case, it is parallel with respect to the Levi-Civita connection of $g$. Its metric dual, called the {\em Lee field}:
\[
B:=\theta ^{\sharp }=-\frac{1}{2}\sum_j \left( z^j \frac{\partial}{\partial z^j}
+\bar{z}^j \frac{\partial}{\partial \bar{z}^j} \right)
\]
is also parallel, and consequently it is
Killing, so we have (see \emph{e.g.}  \cite{Drag-Orn})
\[
\mathcal{L}_Bg=0,\mbox{\,\,}\mathcal{L}_B\theta =0.
\]
If $\varphi $ is the flow generated by $B$ and if $T\in \mathbb{R}$ is fixed,
then $\varphi _T$ is an isometry of  $\mathbb{C}H$ which leaves $g$ and $\theta $
invariant. Then, the direct product space $\hat M:=\mathbb{C}H\times \mathbb{R}$
is endowed with a direct product Riemannian structure (on $\mathbb{R}$
we just consider the standard metric). Moreover, $\hat M$ is foliated by
the real lines, and $\theta $ is in fact a basic one-form which is parallel
with respect to the Bott connection.  We can now \emph{suspend}
(see \emph{e.g.}  \cite{Mo}) the action of  $\varphi _T$ on $\mathbb{C}H$ by considering the
equivalence relation $\left( y,x\right) \sim \left( \varphi _T\left(
y\right) ,x+1\right) $ on $\hat{M}$ and taking the quotient
\[
M:=\hat M/\sim .
\]
We end up with a Riemannian foliation $\left( M,\mathcal{F}\right) $ on
which the basic one-form $\theta $ remains parallel. Applying Theorem \ref{Thm_2}, the
basic Morse-Novikov cohomology groups  $\{\tilde H_{b,\theta }^i\left(
\mathcal{F}\right)\} _{\,0\le i\le q}$ are trivial for the above suspension of the
Hopf manifold.
\end{example}

In what follows we investigate the weaker case when $\theta $ is closed, but
non-exact and non-parallel. We adapt the arguments from \cite{Lich-Gued} (see also \cite{Drag-Orn})
to our framework. More precisely, we prove the following statement:

\begin{theorem}\label{Thm_3}
Let $\left( M,\mathcal{F},g\right) $ be a transversally oriented Riemannian foliation
with the underlying manifold $M$ closed and connected, and suppose the mean curvature $\kappa$ basic. If the basic
one-form $\theta $ is closed but not exact, then the  top
dimension basic Morse-Novikov cohomology group $\tilde H_{b,\theta }^q\left( \mathcal{F}\right)$ vanishes,
\[
\tilde H_{b,\theta }^q( \mathcal{F}) = H_{b,\frac 12\kappa +\theta}^q( \mathcal{F}) =0.
\]
\end{theorem}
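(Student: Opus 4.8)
The plan is to reach the top group through Hodge theory. By the Hodge-type decomposition recalled above, $\tilde H_{b,\theta}^q(\mathcal F)\cong\mathcal H^q(\tilde\Delta_{b,\theta})=\{\omega\in\Omega_b^q(\mathcal F):\tilde d_{b,\theta}\omega=0,\ \tilde\delta_{b,\theta}\omega=0\}$. Since $\omega$ already sits in top transverse degree, $\tilde d_{b,\theta}\omega\in\Omega_b^{q+1}(\mathcal F)=0$ holds automatically, so it suffices to prove that $\tilde\delta_{b,\theta}$ is injective on $\Omega_b^q(\mathcal F)$; this is the Hodge-theoretic incarnation of the classical Poincar\'e-duality reduction of $\tilde H^q_{b,\theta}$ to a degree-zero problem. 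I would organize the computation around the transverse volume form $\nu:=e^1\wedge\cdots\wedge e^q$. As $M$ is transversally oriented and the transverse metric is holonomy invariant, $\nu$ is a globally defined, nowhere-vanishing basic form; since $\nabla$ is metric, $\nabla_{e_i}\nu=0$, and hence $\tilde\nabla_{e_i}^\theta\nu=-\big(\tfrac12\kappa+\theta\big)(e_i)\,\nu$ by the definition of the twisted Bott connection. Because $\Omega_b^q(\mathcal F)$ is spanned over the basic functions by $\nu$, every candidate harmonic form is $\omega=f\nu$ with $f$ basic.

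The second step is to substitute $\omega=f\nu$ into the formula \eqref{delta_tilde_theta}, $\tilde\delta_{b,\theta}=\sum_i-\iota_{e_i}\tilde\nabla_{e_i}^\theta-2\iota_{\theta^\sharp}$. Using $\nabla_{e_i}\nu=0$, each summand collapses onto one of the linearly independent $(q-1)$-forms $\iota_{e_i}\nu$, and after collecting coefficients the equation $\tilde\delta_{b,\theta}(f\nu)=0$ reduces to the single first-order linear equation $d_bf=f\beta$, where $\beta:=\tfrac12\kappa-\theta$. Here $\beta$ is a closed basic one-form, since both $\kappa$ and $\theta$ are. This is the foliated analogue of the scalar identity underlying the classical Lichnerowicz--Gu\'edira argument for the top Morse--Novikov group; the one genuinely new feature is the mean-curvature contribution carried by $\tilde\nabla^\theta\nu$, which is exactly what separates $\beta$ from the naive twist $-\theta$ (the two coincide in the absolute case $\kappa=0$ of a manifold foliated by points).

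The final step is a connectedness argument. Restricting $d_bf=f\beta$ to an arbitrary smooth path $\gamma$ in $M$ gives the linear ordinary differential equation $\tfrac{d}{dt}f(\gamma(t))=f(\gamma(t))\,\beta(\dot\gamma(t))$, whose solution is $f(\gamma(t))=f(\gamma(0))\exp\big(\int_0^t\beta(\dot\gamma)\big)$. Hence $f$ vanishes at one point of a path precisely when it vanishes along the whole path, and since $M$ is connected $f$ is either identically zero or nowhere zero. In the nowhere-zero case $f$ has constant sign and $\beta=d_b\log|f|$ is $d_b$-exact, contrary to the hypothesis; thus $f\equiv0$, the kernel of $\tilde\delta_{b,\theta}$ on $\Omega_b^q(\mathcal F)$ is trivial, and $\tilde H_{b,\theta}^q(\mathcal F)=H_{b,\frac12\kappa+\theta}^q(\mathcal F)=0$.

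I expect the decisive point to be the bookkeeping in the second step: one must check that $\nu$ is basic and $\nabla$-parallel, and that the mean-curvature terms coming from $\tilde\nabla^\theta$ and from the correction $-2\iota_{\theta^\sharp}$ recombine into the single closed one-form $\beta=\tfrac12\kappa-\theta$, so that it is the exactness of $\beta$ that must be excluded. This is also the point where the transverse, rather than classical, nature of the problem enters: when $\kappa$ is exact (in particular in the absolute case $\kappa=0$) non-exactness of $\beta$ is equivalent to non-exactness of $\theta$, and it is this reduction of the obstruction to the hypothesis on $\theta$ that has to be handled with care. Once it is in place, the Hodge identification and the connectedness argument are routine.
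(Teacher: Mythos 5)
Your overall architecture --- Hodge reduction to top-degree $\tilde\Delta_{b,\theta}$-harmonic forms, the ansatz $\omega=f\nu$ with $\nabla_{e_i}\nu=0$, and the connectedness argument showing $f$ is identically zero or nowhere zero --- is exactly the paper's, and your path-ODE version of the last step is cleaner than the paper's open-and-closed argument with local primitives. Moreover, your harmonic equation is the correct one for the operator the theorem actually names: expanding \eqref{delta_tilde_theta}, $\tilde\delta_{b,\theta}=\sum_i-\iota_{e_i}\nabla_{e_i}+\tfrac12\iota_{\kappa^\sharp}-\iota_{\theta^\sharp}$, so $\tilde\delta_{b,\theta}(f\nu)=0$ is equivalent to $d_bf=f\bigl(\tfrac12\kappa-\theta\bigr)$, i.e.\ your $\beta=\tfrac12\kappa-\theta$. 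The genuine gap is your final inference: exactness of $\beta$ does \emph{not} contradict the stated hypothesis, which is non-exactness of $\theta$; the two obstructions coincide only when $[\kappa]=0$ in $H^1_b(\mathcal F)$, i.e.\ when the foliation is taut --- a tension you flag in your closing paragraph but do not (and cannot) resolve. Concretely, on the paper's own non-taut example $\mathrm{T}_A^3$, where $\kappa=\ln\lambda_2\,dt$ is basic, closed and non-exact, take $\theta=\tfrac12\kappa$: then $\theta$ is non-exact, yet $\beta=0$, every constant $f$ solves your equation, $f\nu$ is harmonic, and $\tilde H^2_{b,\theta}(\mathcal F)=H^2_{b,\kappa}(\mathcal F)\simeq\mathbb R\neq0$. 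So the implication you were asked to prove fails as stated, and no repair of your last step can save it.

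The comparison with the paper's proof explains the mismatch. The paper reaches $e_i(f)+f\theta_i=0$, i.e.\ $d_bf=-f\theta$, whose obstruction is exactness of $\theta$ itself, matching the hypothesis; but it does so by expanding $\tilde\delta_{b,\theta+\frac12\kappa}$ as $-\sum_i\iota_{e_i}\nabla_{e_i}-\iota_{\theta^\sharp}$. Relative to \eqref{delta_tilde_theta} the term $\tfrac12\iota_{\kappa^\sharp}$ has disappeared: the operator actually written down is the adjoint of $d_b-(\kappa+\theta)\wedge=d_{b,\kappa+\theta}$, not of $\tilde d_{b,\theta}=d_{b,\frac12\kappa+\theta}$, whose harmonic forms are what the Hodge theorem identifies with $\tilde H^q_{b,\theta}(\mathcal F)$. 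What the paper's computation genuinely establishes is $H^q_{b,\kappa+\theta}(\mathcal F)=0$ for non-exact $\theta$; in the tilde notation this reads: $\tilde H^q_{b,\theta}(\mathcal F)=0$ whenever $\theta-\tfrac12\kappa$ is non-exact, i.e.\ $[\theta]\neq[\tfrac12\kappa]$ in $H^1_b(\mathcal F)$. That corrected hypothesis (which reduces to the stated one exactly in the taut case, e.g.\ the absolute case $\kappa=0$) is precisely what your computation requires, and with it your argument closes verbatim. In short: your calculus is right where the paper's slips, your final step is a non sequitur as stated, and the gap you ran into is a flaw in the theorem's hypothesis rather than in your method.
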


\begin{proof} Assume that $\alpha \in \Omega^q _b\left( \mathcal{F}\right) $,
$\alpha =f\cdot \mathrm{vol}^Q$, where $\mathrm{vol}^Q$ is the (globally defined) transverse
volume form. As $\alpha $ and $\mathrm{vol}^Q$ are basic (leafwise invariant)
differential forms, then $f$ will be a basic (\emph{i.e.}  constant along leaves) smooth
function. Locally we may write:
\begin{equation}
\alpha =f\mbox{\,}e^1\wedge \cdots\wedge e^q,  \label{alpha_local}
\end{equation}
with respect to the locally defined basic orthonormal coframe $\left\{ e^i\right\} _{1\le i\le q}$. Clearly
\[
\tilde d_{b,\theta +\frac 12\kappa }\left( \alpha \right) =0,
\]
and let us assume that also
\[
\tilde \delta _{b,\theta +\frac 12\kappa }\left( \alpha \right) =0.
\]

Then we have
\begin{eqnarray*}
\tilde \delta _{b,\theta +\frac 12\kappa }\left( \alpha \right)
&=&-\sum_ie^i\wedge \tilde \nabla _{e_i}\alpha -\iota _{\theta +\frac
12\kappa }\alpha \\
&=&-\sum_ie^i\wedge \nabla _{e_i}\alpha -\iota _\theta \alpha .
\end{eqnarray*}
We consider the local descriptions $\theta =\sum_i\theta _ie^i$ and also (\ref{alpha_local}).
\begin{equation*}
\tilde \delta _{b,\theta +\frac 12\kappa }\left( \alpha \right)
=-\sum_i\iota _{e_i}\nabla _{e_i}\left( f\mbox{\,}e^1\wedge \cdots\wedge
e^q\right)
-\iota _{\theta _ie_i}\left( f\mbox{\,}e^1\wedge \cdots\wedge e^q\right) .
\end{equation*}
From here, as $\mathrm{vol}^Q$ is parallel with respect to $\nabla $, we find
\[
e_i\left( f\right) +f\theta _i=0,
\]
for $1\le i\le q$. As $f$ is basic, we obtain
\begin{equation*}
d_bf+f\theta =\sum_ie^i\wedge \nabla _{e_i}f+\sum_if\theta _ie_i=0.
\end{equation*}
Assuming that the function $f$ is nowhere vanishing, we can write
\begin{equation}
\theta =d\left( -\ln f\right) ,  \label{ln_global}
\end{equation}
which is a contradiction with the initial assumption. Then the zero set of $f $ cannot be empty.

Consider a finite open cover $\left\{ U_i\right\} _{i\in I}$ of $M$,
such that $\left( U_i,\varphi _i\right) $ are contractible foliated local
maps. Then we can find some positive basic functions $\psi _i$ satisfying the
property
\begin{equation}
\theta _{\mid U_i}=-d\left( \ln \psi _i\right) .  \label{ln_local}
\end{equation}
In fact we can construct such function in a canonical way on a local
transversal $T$ such that the above relation is fulfilled for the local
projection of the basic (projectable) one-form $\theta $ on $T$; then we can
take the pull-back of the function on $U_i$, obtaining a local basic
function. Then (\ref{ln_global}) and (\ref{ln_local}) imply $f=\mathbf{c}_i\psi _i$ on $U_i$,
with $\mathbf{c}_i\in \mathbb{R}$. If $f=0$ at some point then necessarily
$\mathbf{c}_i=0$, and $f$ vanishes on a whole open
neighborhood. So the zero set is open. Clearly the zero set of  $f$ is also closed,
and hence it coincides with  the connected manifold $M$, and $\alpha =0$. Consequently there
is no basic harmonic form of degree $q$ with respect to $\tilde \Delta
_{b,\theta }$. Now the Hodge decomposition theorem yields
$\tilde H_{b,\theta }^i\left( \mathcal{F}\right)
=H_{b,\frac 12\kappa +\theta}^q\left( \mathcal{F}\right) =0$.
\end{proof}

\begin{remark}
We point out that a theory of geometric objects which would be
at the same time leafwise invariant and compactly supported is not easy to
undertake, so the extension of the above result to the noncompact case is
not trivial.
\end{remark}

\begin{example} We now present an application for the above theorem. A classical Riemannian
flow can be constructed starting with a matrix $A\in \mathrm{SL}(2,\mathbb{Z})$,
with $\mathrm{Tr}A>2$ \cite{Car,Mo,To}. If $\{ \lambda_i \}_{1\le i \le 2}$
are the eigenvalues of $A$, it is easy to see that
\[
\lambda _i\neq 1,\,\lambda _i>0.
\]
Let $\{ v_i \}_{1\le i \le 2}$  be the corresponding orthonormal
eigenvectors. We denote by $H$ the space $\mathbb{R}^3$ regarded as
\begin{equation}\label{H_identif}
\mathbb{R}^3 \equiv \mathbb{R}\times \mathbb{R}^2
\equiv \mathbb{R}\times \mathbb{R}v_1\times \mathbb{R}v_2
\end{equation}
We endow $H$ with a Lie group structure using the multiplication
\[
p\cdot p^{\prime }:=(t+t^{\prime },\lambda _1^t\alpha ^{\prime }+\alpha
,\lambda _2^t\beta ^{\prime }+\beta )\;,
\]
for any $p=(t,\alpha ,\beta )$, $p^{\prime }
=(t^{\prime },\alpha ^{\prime},\beta ^{\prime }) \in H$, with respect to the identification (\ref{H_identif}) of $\mathbb{R}^3$. Starting with the orthonormal basis $\left\{
e:=(1,0,0),v_1,v_2\right\} $, we construct three left invariant
vector fields on $H$ such that at any point $p$ we have
\begin{eqnarray*}
e_p &=&(1,0,0), \\
v_{1,p} &=&\lambda _1^t(0,1,0), \\
v_{2,p} &=&\lambda _2^t(0,0,1),
\end{eqnarray*}
which, in turn, generate the warped metric
\[
g=\left(
\begin{array}{ccc}
1 & 0 & 0 \\
0 & \lambda _1^{-2t} & 0 \\
0 & 0 & \lambda _2^{-2t}
\end{array}
\right) .
\]
The manner we choose the matrix $A$ insures  that the standard subgroup
$\Gamma :=\mathbb{Z}\times \mathbb{Z}^2$ of $\mathbb{R}^3$ remains a discrete and
cocompact subgroup of $H$. Consequently, we  obtain the quotient Lie group
$\mathrm{T}_A^3:=\Gamma \backslash H$. It also inherits  the above left
invariant geometric objects, which will be denoted in the same way, for
convenience. The flow $\varphi _2$ generated by $v_2$ induces on our manifold
a foliation structure $\mathcal{F}$, which can be proved to be a $GA-$foliated manifold,
where $GA$ is the the \emph{orientation preserving affine
group} \cite{Car}.

We compute the Lie brackets
\begin{eqnarray*}
\lbrack e,v_2] &=&\ln \lambda _1v_2\;, \\
\lbrack e,v_3] &=&\ln \lambda _2v_3\;, \\
\lbrack v_2,v_3] &=&0\;,
\end{eqnarray*}
and we use  the classical Koszul formula for the metric $g$ on $\mathrm{T}_A^3 $ to obtain
\begin{eqnarray*}
g\left( \nabla _{v_2}v_2,e\right) &=&\ln \lambda _2, \\
g\left( \nabla _{v_2}v_2,v_1\right) &=&0.
\end{eqnarray*}
Then $\kappa ^{\sharp }=\ln \lambda _2e$.  As $e^{\flat }=dt$, we
finally find
\[
\kappa =\ln \lambda _2dt.
\]
It is now easy to see that on the compact quotient manifold
$\mathrm{T}_A^3$ the basic form $dt$ is closed but not exact. Furthermore,
$H_b^1(\mathcal{F})\equiv \mathbb{R}$ \cite{Car}, and the basic differential
one-forms $\theta$ that are closed but not exact are precisely
\[
\theta=\mathbf{c}dt+df, \quad \mbox{with\,\,} \mathbf{c}\in \mathbb{R}\setminus\{0\}\mbox{\,and\,}f\in \Omega^0_b(\mathcal{F}).
\]
Applying Theorem \ref{Thm_3} for the above considered Riemannian flow, we obtain
the vanishing of the top dimension group of the basic Morse-Novikov cohomology
complex $\left( \Omega _b\left( \mathcal{F}\right) ,\tilde d_{b,\theta}\right) $, \emph{i.e.}

\[
\tilde H_{b,\theta}^2\left( \mathcal{F}\right) =0.
\]

This result can be regarded as a generalization of \cite[III, Proposition 2]{Car}.
Indeed, for the particular choice $\theta=-1/2\cdot\ln \lambda _2\, dt$, we obtain the vanishing of the basic
cohomology group $H_b^2(\mathcal{F})$, with the corresponding tautness consequences.
For the particular case $\theta=0$ see also \cite{Hab-Ric}.
\end{example}

In the following, we investigate the vanishing of the basic Morse-Novikov
cohomology under the assumption of certain conditions related to the
curvature-type operators. Note that in the classical case
represented by closed Riemannian manifolds, if the de\thinspace Rham cohomology
vanishes, then all closed differential one-forms are in fact exact. The
Morse-Novikov cohomology complex, being isomorphic to de\thinspace Rham complex (as
we noticed in the Section \ref{2.2}), vanishes, too. If the
curvature operator is non-negative and positive at some point, applying a
well known result of Gallot and Meyer \cite{Gal-Mey} on closed Riemannian manifolds, we
obtain that the Morse-Novikov cohomology complex is trivial in this case.

In the general case of a Riemannian foliation (not necessarily with basic mean curvature)
for any $1$\--form $\theta$ which is basic and exact with respect to the operator $d_b$,
we can consider a basic function $f$ with $d_bf=\theta$; then, similar to the classical case
we obtain an isomorphism between the basic de\thinspace Rham and Morse-Novikov cohomologies
using the mapping $\left[ \alpha \right] \rightarrow \left[ e^{-f}\alpha \right] $.

Now, in order to obtain the vanishing of the basic Morse-Novikov cohomology complex
in our framework, the only needed ingredient is the corresponding version of the result
of Gallot and Meyer. This was achieved in \cite{Heb, Mi-Ru-To}, where the authors used
arguments related to functional analysis and operator theory.

Consequently, we easily obtain the following result:

\begin{proposition}\label{Propo}
If a Riemannian foliation $\left( M,\mathcal{F},g\right) $ has
non-negative, and positive at some point, basic curvature operator, then
any closed basic $1-$form $\theta$ is exact, and consequently
\[
H_{b,\theta }^i\left( \mathcal{F}\right) =0,\mbox{\,\,}0<i<q.
\]
\end{proposition}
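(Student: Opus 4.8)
The plan is to reduce the statement to the two ingredients the authors have already assembled in the surrounding discussion: the conformal-change isomorphism between basic de Rham and basic Morse-Novikov cohomology, and the foliated analogue of the Gallot--Meyer theorem. The claim has two parts. First, under a non-negative basic curvature operator that is positive somewhere, every closed basic $1$-form $\theta$ is exact. Second, granting exactness, the Morse-Novikov groups $H^i_{b,\theta}(\mathcal{F})$ vanish in the intermediate degrees $0<i<q$. I would treat these in turn.

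For the first part, the key observation is that exactness of all closed basic $1$-forms is equivalent to the vanishing of the first basic de Rham cohomology, $H^1_b(\mathcal{F})=0$. So I would invoke the foliated Gallot--Meyer result from \cite{Heb, Mi-Ru-To}: a Riemannian foliation whose basic (transverse) curvature operator is non-negative, and positive at some point, has vanishing basic de Rham cohomology in all degrees $0<i<q$; in particular $H^1_b(\mathcal{F})=0$. Because $\theta$ is a \emph{basic} closed $1$-form, its basic de Rham class lives in $H^1_b(\mathcal{F})$, and this vanishing forces $\theta = d_b f$ for some basic function $f$.

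For the second part, I would use exactly the mechanism the authors spelled out in the paragraph preceding the proposition. Once $\theta = d_b f$ with $f$ basic, the conformal multiplication $[\alpha]\mapsto[e^{-f}\alpha]$ furnishes an isomorphism between the basic de Rham cohomology $H^i_b(\mathcal{F})$ and the basic Morse-Novikov cohomology $H^i_{b,\theta}(\mathcal{F})$; this is verified directly from the identity $d_\theta(e^{f}\beta)=e^{f}\,d_b\beta$ (equivalently $e^{-f}d_b(e^{f}\cdot)=d_b-\theta\wedge=d_{b,\theta}$), which shows $\alpha\mapsto e^{-f}\alpha$ intertwines the complexes $(\Omega_b(\mathcal{F}),d_b)$ and $(\Omega_b(\mathcal{F}),d_{b,\theta})$ and hence descends to cohomology. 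Applying the same Gallot--Meyer vanishing in the intermediate degrees then gives $H^i_{b,\theta}(\mathcal{F})\cong H^i_b(\mathcal{F})=0$ for $0<i<q$.

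The routine calculations here are genuinely routine, so \textbf{the main obstacle is not computational but citational}: the whole argument rests on having the correct foliated version of the Gallot--Meyer theorem at hand, with hypotheses matching ours (bundle-like metric, basic mean curvature, non-negativity of the \emph{basic} curvature operator with strict positivity at a point). The delicate point is that the classical Gallot--Meyer statement is about closed Riemannian manifolds, whereas here one needs vanishing of \emph{basic} cohomology, which requires the transverse elliptic theory and the Hodge-type decomposition for the basic complex; I would take care to confirm that \cite{Heb, Mi-Ru-To} indeed supply the vanishing in the full intermediate range $0<i<q$ rather than only in degree $1$, since both the exactness of $\theta$ and the final conclusion lean on it.
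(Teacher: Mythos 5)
Your proposal is correct and follows exactly the argument the paper intends: the foliated Gallot--Meyer theorem of \cite{Heb, Mi-Ru-To} gives $H_b^i(\mathcal{F})=0$ for $0<i<q$ (so in particular $\theta=d_bf$ for a basic $f$), and the conformal-change map then identifies $H_{b,\theta}^i(\mathcal{F})$ with $H_b^i(\mathcal{F})$, which the paper itself assembles in the two paragraphs preceding the proposition before stating that the result follows "easily." The only blemish is the sign slip in your parenthetical intertwining identity (it should read $e^{f}d_b(e^{-f}\cdot)=d_b-\theta\wedge$, consistent with your correct primary identity $d_{b,\theta}(e^{f}\beta)=e^{f}d_b\beta$), which does not affect the argument.
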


\begin{remark}
We notice that, in accordance with Remark \ref{remark_weitz}, for the particular case when $\kappa $ is basic
and parallel, the result from \cite{Heb, Mi-Ru-To} can be derived in the classical fashion.
\end{remark}

\begin{example} As an application of Proposition \ref{Propo}, we  consider the case of a suspension
foliation used by Connes \cite{Connes} (see also \cite[Appendix E]{Mo}).
More precisely, let $S$ be a compact orientable surface $S$ of
genus $2$, with universal cover $\tilde S$, and define   $\widetilde{M}:=\mathrm{SO}(3,\mathbb{R})\times \widetilde{S}$.
Let $h:\pi_1(S)\rightarrow \mathrm{SO}(3,\mathbb{R})$ be a group homomorphism. Define a smooth \emph{diagonal}
action of $\pi _1(S)$ on $\widetilde{M}$ by setting
\[
R_{[\gamma ]}(y,\widehat{x})=(h([\gamma ]^{-1})(y),\widehat{x}\circ [\gamma])
\]
for each $[\gamma ]\in \pi _1(S)$.

The quotient manifold  $M:=\widetilde{M}/R$ is then a
$\mathrm{SO}(3,\mathbb{R})$ - foliation. If, moreover, $h$ is injective, then
the leaves are actually diffeomorphic to $\mathbb{R}^2$.

Endow $\widetilde{M}$ with a direct product Riemannian metric, which
is also invariant with respect to the above action $R$. Then the foliated manifold $M$ inherits a
bundle-like metric. Concerning the transverse part,
if the image of $h$ is represented by canonical mappings produced by taking left
multiplications on $\mathrm{SO}(3,\mathbb{R})$, then a left invariant metric
$g$ can be defined in a standard way on our Lie group. For instance, in the
Lie algebra $\mathfrak{o}(3,\mathbb{R})$ we choose $e_1$, $e_2$ and $e_3$ as
\[
e_1:=\left(
\begin{array}{rrr}
0 & -1 & 0 \\
1 & 0 & 0 \\
0 & 0 & 0
\end{array}
\right) ,\mbox{\,\,}e_2:=\left(
\begin{array}{rrr}
0 & 0 & -1 \\
0 & 0 & 0 \\
1 & 0 & 0
\end{array}
\right) ,\mbox{\,\,}e_3:=\left(
\begin{array}{rrr}
0 & 0 & 0 \\
0 & 0 & -1 \\
0 & 1 & 0
\end{array}
\right) ,
\]
and the metric is constructed such that $\{e_i \}_{1\le i \le 3 }$ becomes an orthonormal basis. As
the group is compact, the metric is in fact bi-invariant, and the following
formula can be used to compute the sectional curvature $k$ (see \emph{e.g.}  \cite{Mil}).
\[
k(e_i,e_j)=\frac 14g\left( [e_i,e_j],[e_i,e_j]\right) ,\mbox{\,\,}1\le
i,j\le 3.
\]
As
\[
\lbrack e_1,e_2]=e_3,\mbox{\,\,}[e_2,e_3]=e_1\mbox{\,\,}[e_3,e_1]=e_2,
\]
we obtain
\[
k(e_1,e_2)=k(e_2,e_3)=k(e_3,e_1)=\frac 14,
\]
$\mathrm{SO}(3,\mathbb{R})$ being in fact a distinguished compact Lie group
admitting metrics with strictly positive curvature \cite{Mil}.

Proposition \ref{Propo} now implies that the Morse-Novikov cohomology groups are trivial
for the Connes foliation.
\end{example}

On the other hand, in \cite[Corollary 6.8]{Hab-Ric}, for the particular case when
$\theta \equiv 0$, the authors obtain a vanishing result for the groups of basic
Morse-Novikov cohomology which holds even in the case when the basic curvature operator is not necessarily
non-negative. Note that in this case the basic de\thinspace Rham complex may not be trivial.
Starting from this remark, we obtain the following vanishing result for a
closed basic one-form $\theta $.

\begin{theorem}\label{Thm_4}
Let  $\left( M,\mathcal{F},g\right) $ be a Riemannian foliation
with $M$ closed and the mean curvature $\kappa $ basic. Assume that $\theta $ is a basic closed one-basic form and
define the bi-linear map
\begin{equation} \label{beta_theta}
\beta_{\theta} :\Omega _b\left( \mathcal{F}\right) \times
\Omega _b\left( \mathcal{F}\right) \rightarrow C^\infty \left( M\right),
\mbox{\,with\,} \beta_{\theta} (\cdot ,\cdot ):=\mathcal{L}_{\theta ^{\sharp }}g
\left( \cdot,\cdot \right) +g\left( \mathcal{R}\cdot ,\cdot \right).
\end{equation}
If $\beta $ is non-negatively defined, and $\beta _x$ is positively defined at some point $x\in
M$, then the basic Morse-Novikov cohomology groups vanish
\[
\tilde H_{b,\theta }^i\left( \mathcal{F}\right)=0,\mbox{\,}  0<i<q.
\]
\end{theorem}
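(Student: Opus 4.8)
The plan is to run the Bochner method directly from the twisted Weitzenb\"ock formula \eqref{basic_Weitzenbock formula}. By the Hodge-type decomposition quoted above, it is enough to show that every $\alpha\in\mathcal H^p(\tilde\Delta_{b,\theta})$ with $0<p<q$ vanishes identically. Since $\tilde\delta_{b,\theta}=\tilde d_{b,\theta}^{*}$ and $M$ is closed, the condition $\tilde\Delta_{b,\theta}\alpha=0$ is equivalent to $\tilde d_{b,\theta}\alpha=0$ together with $\tilde\delta_{b,\theta}\alpha=0$, and in particular $\langle\tilde\Delta_{b,\theta}\alpha,\alpha\rangle=0$. Pairing \eqref{basic_Weitzenbock formula} with $\alpha$ in the $L^2$ product I would obtain
\[
0=\sum_i\langle\tilde\nabla_{e_i}^{\theta*}\tilde\nabla_{e_i}^\theta\alpha,\alpha\rangle-2\sum_i\langle e_i\cdot\iota_{\nabla_{e_i}\theta^\sharp}\alpha,\alpha\rangle+\langle\mathcal R\alpha,\alpha\rangle.
\]
For the first summand, Lemma \ref{adj_div_bar} guarantees that $\tilde\nabla_{e_i}^{\theta*}$ is the genuine formal adjoint of $\tilde\nabla_{e_i}^\theta$, so this term equals $\sum_i\|\tilde\nabla_{e_i}^\theta\alpha\|^2\ge 0$.

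The heart of the argument is to recognise the remaining two summands as $\int_M\beta_\theta(\alpha,\alpha)\,d\mu_g$. First I would split the Clifford factor via $e_i\cdot=e^i\wedge-\iota_{e_i}$, so that $-2\sum_i e_i\cdot\iota_{\nabla_{e_i}\theta^\sharp}=-2\sum_i e^i\wedge\iota_{\nabla_{e_i}\theta^\sharp}+2\sum_i\iota_{e_i}\iota_{\nabla_{e_i}\theta^\sharp}$. Writing $A_{ij}:=g(\nabla_{e_i}\theta^\sharp,e_j)$ and using that $\theta$ is closed, so that for the torsion-free Bott connection $A_{ij}=A_{ji}$, the last sum becomes $2\sum_{i,j}A_{ij}\,\iota_{e_i}\iota_{e_j}$, which vanishes since $A$ is symmetric while $\iota_{e_i}\iota_{e_j}$ is antisymmetric. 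The surviving term $-2\sum_i e^i\wedge\iota_{\nabla_{e_i}\theta^\sharp}$ acts as $-2$ times the derivation extension of the symmetric endomorphism $A$; since $2A_{ij}=(\mathcal L_{\theta^\sharp}g)_{ij}$, pointwise this yields $g\!\left(-2\sum_i e_i\cdot\iota_{\nabla_{e_i}\theta^\sharp}\alpha,\alpha\right)=\mathcal L_{\theta^\sharp}g(\alpha,\alpha)$ in the form-action convention of \eqref{beta_theta}. Adding the curvature contribution $g(\mathcal R\alpha,\alpha)$, the identity collapses to
\[
0=\sum_i\|\tilde\nabla_{e_i}^\theta\alpha\|^2+\int_M\beta_\theta(\alpha,\alpha)\,d\mu_g.
\]

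Finally I would exploit the sign hypotheses. Both terms on the right are non-negative: the first manifestly, the second because $\beta_\theta$ is non-negatively defined. Hence each vanishes separately, giving $\tilde\nabla_{e_i}^\theta\alpha=0$ for every transverse $e_i$ and $\beta_\theta(\alpha,\alpha)\equiv 0$ as a function on $M$. Since $\beta_\theta$ is positive definite at $x$, the latter forces $\alpha(x)=0$. By Remark \ref{tilde_parallel} the basic form $\alpha$ is also $\tilde\nabla^\theta$-parallel along the leaves, so altogether $\alpha$ is a globally $\tilde\nabla^\theta$-parallel section of $\Lambda^p Q^{*}$; a parallel section vanishing at one point of a connected manifold vanishes identically, whence $\alpha\equiv 0$, $\mathcal H^p(\tilde\Delta_{b,\theta})=0$, and $\tilde H_{b,\theta}^p(\mathcal F)=0$.

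The main obstacle I anticipate is the second paragraph, namely matching the operator $-2\sum_i e_i\cdot\iota_{\nabla_{e_i}\theta^\sharp}$ with the tensor $\mathcal L_{\theta^\sharp}g$ of \eqref{beta_theta}. This rests on two points that must be handled carefully: using $d\theta=0$ to symmetrise $\nabla\theta^\sharp$ and thereby annihilate the $\iota_{e_i}\iota_{\nabla_{e_i}\theta^\sharp}$ contribution, and tracking the sign that appears when the symmetric tensor acts on covectors rather than on vectors, which is exactly what makes $+\mathcal L_{\theta^\sharp}g$ (rather than its negative) the correct term in $\beta_\theta$. The restriction $0<p<q$ enters because $\mathcal R$ only carries curvature information in intermediate degrees, and the closing parallel-transport step additionally relies on the connectedness of $M$.
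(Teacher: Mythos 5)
Your proposal is correct, and its skeleton is the same as the paper's: reduce to harmonic forms via the Hodge decomposition, pair the Weitzenb\"ock formula \eqref{basic_Weitzenbock formula} with a harmonic $\alpha$, identify the middle term as the $\mathcal{L}_{\theta^{\sharp}}g$ part of $\beta_{\theta}$, and run the Bochner sign argument. Where you genuinely differ is in that middle-term identification. The paper kills the $\iota_{e_i}\iota_{\nabla_{e_i}\theta^{\sharp}}$ contribution purely by degree counting (a $(p-2)$-form pairs to zero against the $p$-form $\alpha$), with no appeal to $d\theta=0$ at that step, and then handles the surviving term with the Cartan-type identity of Lemma \ref{inter_prod_Clifford}, $\sum_i e^i\wedge\iota_{\nabla_{e_i}\theta^{\sharp}}\alpha=\mathcal{L}_{\theta^{\sharp}}\alpha-\nabla_{\theta^{\sharp}}\alpha$, combined with the Leibniz rule for $\mathcal{L}_{\theta^{\sharp}}$ acting on $g(\alpha,\alpha)$, which gives $g\bigl(\sum_i e^i\wedge\iota_{\nabla_{e_i}\theta^{\sharp}}\alpha,\alpha\bigr)=-\tfrac12\,\mathcal{L}_{\theta^{\sharp}}g(\alpha,\alpha)$ directly. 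You instead use $d\theta=0$ plus torsion-freeness to symmetrize $A=\nabla\theta^{\sharp}$, annihilate the double interior product by the symmetric-versus-antisymmetric pairing, and recognize the remaining operator as $-2$ times the derivation extension of $A$, with $2A=\mathcal{L}_{\theta^{\sharp}}g$ and the correct sign flip when the tensor acts on covectors. Both computations are sound; yours is stronger pointwise (it identifies the operator itself, not merely its pairing with $\alpha$), but it spends the closedness hypothesis where the paper does not need it and asks the reader to verify the derivation-extension/Lie-derivative dictionary, which is exactly what Lemma \ref{inter_prod_Clifford} packages. Your closing paragraph (transverse parallelism from the vanishing of $\sum_i\|\tilde\nabla^{\theta}_{e_i}\alpha\|^2$, leafwise parallelism from Remark \ref{tilde_parallel}, then propagation of the zero at $x$ by parallel transport on the connected manifold) is precisely what the paper compresses into ``arguing as in the classical case''.
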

\begin{proof}
We start from (\ref{basic_Weitzenbock formula}) and let
$\alpha \in \Omega ^i(\mathcal{F})$, $0<i<q$. Taking integrals on the closed
manifold $M$, we obtain
\[
\left\langle \tilde \Delta _{b,\theta }\alpha ,\alpha \right\rangle
=\sum_i\left\| \tilde \nabla _{e_i}^\theta \alpha \right\| ^2
-2\int_M g(\sum_ie_i\cdot \iota _{\nabla _{e_i}\theta ^{\sharp }}\alpha ,\alpha)
d\mu _g+\int_M g( \mathcal{R}\alpha ,\alpha ) d\mu _g.
\]
where, for arbitrary $\alpha \in \Omega ^i(\mathcal{F})$, we define
$\left\| \alpha\right\|:= \sqrt{\langle\alpha,\alpha}\rangle$.
Then, in order to apply the Bochner technique, we take a closer look at the
middle term. As the scalar product of forms of different degrees vanishes,  Lemma \ref{inter_prod_Clifford} implies:
\begin{eqnarray*}
g( \sum_ie_i\cdot \iota _{\nabla _{e_i}\theta }\alpha ,\alpha ) \
&=&g( \sum_ie^i\wedge \iota _{\nabla _{e_i}\theta ^{\sharp }}\alpha
,\alpha ) \\
\ &=&g( \mathcal{L}_{\theta ^{\sharp }}\alpha ,\alpha )
-g(\nabla _{\theta^{\sharp}} \alpha ,\alpha ) \\
\ &=&-\frac 12\mathcal{L}_{\theta ^{\sharp }}g( \alpha ,\alpha) .
\end{eqnarray*}
This yields the formula
\[
\left\langle \tilde \Delta _{b,\theta }\alpha ,\alpha \right\rangle
=\sum_i\left\| \tilde \nabla _{e_i}^\theta \alpha \right\| ^2+\int_M\left(
\mathcal{L}_{\theta ^{\sharp }}g\left( \alpha ,\alpha \right) +g\left(
\mathcal{R}\alpha ,\alpha \right) \right) d\mu _g.
\]
Arguing now as in the classical case, and considering the  isomorphism
$\mathcal{H}^i( \tilde \Delta _{b,\theta })
\simeq H_{b,\frac12\kappa +\theta }^i\left( \mathcal{F}\right) $, we obtain the result.
\end{proof}

We outline the particular case of Theorem \ref{Thm_4} obtained for $\theta=-1/2 \cdot \kappa$.
Then, the basic Morse-Novikov cohomology complex is again just the classical
basic de\thinspace Rham complex, and we get vanishing conditions suitable for Riemannian foliations with non-positive transverse curvature which are different from the previous results (see \emph{e.g.}  \cite{Heb,Mi-Ru-To}).

\begin{corollary}
If the bi-linear map $\beta_{-\frac{1}{2} \kappa}$ defined in (\ref{beta_theta})
is non-negatively defined and positively defined at some point $x\in
M$, then the basic cohomology groups are trivial,
\[
H_{b}^i\left( \mathcal{F}\right)=0,\quad\mbox{\,}  0<i<q.
\]
Furthermore, as $H^1_b(\mathcal{F})\equiv 0$, the foliation is taut, \cite{Al}.
\end{corollary}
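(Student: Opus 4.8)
The plan is to specialize Theorem \ref{Thm_4} to the case $\theta=-\frac12\kappa$ and then exploit the fact that, as noted in the earlier remarks, the twisted operator $\tilde\Delta_{b,\theta}$ reduces to the ordinary basic Laplacian $\Delta_b$ precisely when $\theta=-\frac12\kappa$. First I would observe that with this choice of $\theta$ we have $\frac12\kappa+\theta=0$, so the basic Morse-Novikov cohomology groups $\tilde H_{b,\theta}^i(\mathcal{F})=H_{b,\frac12\kappa+\theta}^i(\mathcal{F})$ coincide with the ordinary basic de\thinspace Rham cohomology groups $H_b^i(\mathcal{F})$. The hypothesis of Theorem \ref{Thm_4}, namely that $\beta_\theta$ is non-negatively defined and positively defined at some point, is exactly the hypothesis that $\beta_{-\frac12\kappa}$ enjoys these properties. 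Hence Theorem \ref{Thm_4} applies verbatim and yields $H_b^i(\mathcal{F})=0$ for $0<i<q$, which is the first assertion of the corollary.

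For the tautness conclusion I would invoke the characterization of tautness in terms of the top-degree basic cohomology and, in particular, the vanishing of $H_b^1(\mathcal{F})$. The key point is that the hypothesis forces $H_b^1(\mathcal{F})\equiv 0$, since $1$ lies strictly between $0$ and $q$ whenever $q\ge 2$. Then I would appeal to the classical result, cited as \cite{Al}, that a Riemannian foliation on a closed manifold is taut if and only if the mean curvature form $\kappa$ is exact (equivalently, if the relevant basic cohomology obstruction vanishes). The vanishing of the first basic cohomology group, together with the fact that $\kappa$ is basic and closed (recall $\kappa=\kappa_b$ is smooth and closed), shows that $\kappa$ must be basic-exact; since $\kappa$ represents a class in $H_b^1(\mathcal{F})$, its triviality means $\kappa=d_b h$ for some basic function, which is exactly the condition guaranteeing the existence of a bundle-like metric making all leaves minimal.

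The main subtlety I anticipate is the degenerate low-dimensional situation: the statement $H_b^i(\mathcal{F})=0$ is asserted only for $0<i<q$, so the tautness argument via $H_b^1(\mathcal{F})=0$ genuinely needs $q\ge 2$ for the index $i=1$ to fall in the open range. I would therefore note that the interesting content is for transverse codimension $q\ge 2$, where $H_b^1(\mathcal{F})$ is among the vanishing groups. The only other point requiring care is that the reduction of $\tilde\Delta_{b,-\frac12\kappa}$ to $\Delta_b$, and the corresponding identification of harmonic forms with basic de\thinspace Rham cohomology, is exactly the content of the remark following the definition of $\tilde\Delta_{b,\theta}$, so no new analytic input is needed beyond Theorem \ref{Thm_4} itself. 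With these observations the corollary follows immediately, the essential work having already been carried out in the proof of the theorem.
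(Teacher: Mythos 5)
Your proposal is correct and takes essentially the same route as the paper: the corollary is just Theorem \ref{Thm_4} specialized to $\theta=-\frac{1}{2}\kappa$, where $\frac{1}{2}\kappa+\theta=0$ identifies the twisted groups with the ordinary basic de\thinspace Rham groups $H_b^i(\mathcal{F})$, and tautness then follows from the \'Alvarez L\'opez criterion \cite{Al} that the vanishing of the class of $\kappa$ in $H^1_b(\mathcal{F})$ (here forced by $H^1_b(\mathcal{F})=0$) is equivalent to the existence of a taut metric. Your remark that the tautness conclusion implicitly requires $q\ge 2$, so that $i=1$ lies in the range $0<i<q$, is a legitimate point that the paper leaves unstated.
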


Finally, another particular case is represented by closed Riemannian manifolds.
Again, this classical framework is obtained for the limiting case when the leaves are points.
The mean curvature vanishes, the basic geometric objects become the classical ones,
and we obtain the Bochner technique adapted for Morse-Novikov cohomology.
\begin{corollary}
Let $\left( M,g\right)$ be a closed Riemannian manifold of
dimension $n$ and let $\theta$ be a closed differential one-form. If the bi-linear map
\[
\beta_{\theta} :\Omega\left( M\right) \times
\Omega\left( M\right) \rightarrow \mathcal{C}^\infty \left( M\right),
\mbox{\,with\,\,} \beta_{\theta} (\cdot ,\cdot ):=\mathcal{L}_{\theta ^{\sharp }}g
\left( \cdot,\cdot \right) +g\left( \mathcal{R}\cdot ,\cdot \right),
\]
is non-negatively defined, and positively defined at some point $x\in M$, then
Morse-Novikov cohomology groups vanish:
\[
H_\theta ^i\left( M \right) =0, \mbox{\,for \,}0<i<n.
\]
\end{corollary}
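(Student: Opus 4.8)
The plan is to reduce this final Corollary to a direct specialization of Theorem \ref{Thm_4}. The key observation is that a closed Riemannian manifold $(M,g)$ of dimension $n$ is precisely a Riemannian foliation in the limiting case where the leaves are points, i.e.\ $T\mathcal{F}=0$ and $Q=TM$, so that $q=n$. First I would verify that in this degenerate setting all the foliation-theoretic machinery collapses to the classical objects: since the leaves are points, the mean curvature form $\kappa$ vanishes identically, so the hypothesis that $\kappa$ is basic is automatically satisfied and $\frac12\kappa+\theta=\theta$. The basic de\thinspace Rham complex $\Omega_b(\mathcal{F})$ becomes the full de\thinspace Rham complex $\Omega(M)$, the Bott connection $\nabla$ becomes the Levi-Civita connection $\nabla^M$, and the twisted operator $\tilde d_{b,\theta}$ reduces to the ordinary Morse-Novikov derivative $d_\theta=d-\theta\wedge$. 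Consequently $\tilde H_{b,\theta}^i(\mathcal{F})=H_\theta^i(M)$ and the curvature operator $\mathcal{R}$ is the classical Weitzenb\"ock curvature term acting on $\Omega(M)$.

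Next I would check that the bi-linear map $\beta_\theta$ appearing in the Corollary coincides with the one in (\ref{beta_theta}) under this identification. Since both are defined by the same formula $\beta_\theta(\cdot,\cdot)=\mathcal{L}_{\theta^\sharp}g(\cdot,\cdot)+g(\mathcal{R}\cdot,\cdot)$, and since in the absolute case the transverse geometry is just the ambient Riemannian geometry, the two maps are literally the same object. Thus the hypotheses of the Corollary---that $\beta_\theta$ is non-negative everywhere and positive at some point $x\in M$---are exactly the hypotheses of Theorem \ref{Thm_4} specialized to the point-leaf foliation.

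Having set up this dictionary, I would simply invoke Theorem \ref{Thm_4}: its conclusion $\tilde H_{b,\theta}^i(\mathcal{F})=0$ for $0<i<q$ translates, under the identifications above, into $H_\theta^i(M)=0$ for $0<i<n$, which is precisely the assertion of the Corollary. The argument is therefore essentially a translation of notation rather than new analysis; the Weitzenb\"ock formula (\ref{basic_Weitzenbock formula}) and the Bochner integration step are inherited verbatim from the proof of Theorem \ref{Thm_4}.

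The only point requiring genuine care---and hence the main obstacle---is making the degeneration to the point-leaf case fully rigorous. In the foliated setting several constructions (the decomposition $\kappa=\kappa_b+\kappa_o$, Dom\'\i nguez's theorem, the Hodge decomposition for the transverse elliptic complex) were justified using properties of the transverse distribution $Q$; I would need to confirm that each of these remains valid, or becomes trivial, when $Q=TM$ and there is no leafwise direction. In particular the Hodge-type decomposition used to pass from harmonic forms to cohomology must reduce to the classical Hodge theory on the closed manifold $M$, which it does since $\tilde\Delta_{b,\theta}$ becomes a genuine elliptic operator on $\Omega(M)$ with the same symbol as the ordinary Hodge Laplacian. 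Once this consistency is checked, the vanishing follows immediately and no independent computation is needed.
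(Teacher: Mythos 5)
Your proposal is correct and is exactly the paper's own argument: the authors justify this corollary precisely as the limiting case of Theorem \ref{Thm_4} when the leaves are points, so that $\kappa=0$, the basic objects become the classical ones, and the Bochner technique of the foliated setting specializes verbatim. The consistency checks you flag (Hodge theory, vanishing of $\kappa$, identification of $\mathcal{R}$ with the classical Weitzenb\"ock curvature term) are indeed all that is needed, and they hold trivially in the absolute case.
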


\section{Applications to l.c.s. and l.c.K. foliations}\label{l.c.s._l.c.K.}

In this final section we apply the results obtained in the rest of the paper to the particular
case represented by l.c.s. and l.c.K. foliations.

A \emph{locally conformally symplectic }manifold\emph{\ }(l.c.s.) is a
differentiable manifold $M$ of dimension $2n$ endowed with a differentiable form
$\omega $ of dimension 2 which is  locally conformal with a symplectic form (\emph{i.e.}  closed and non-degenerate
differentiable form of dimension 2) \cite{Vais3}: $d(e^f_U\cdot \omega_{|_U})=0$.   $\omega $ will be also called
\emph{locally conformally symplectic structure}. Following \cite{Drag-Orn, Vais1},
we make the convention that the case when this procedure can be performed
globally is not viewed as a particular case of l.c.s., but as an opposite
case.

A condition equivalent to the definition is the existence of a  global closed
one-form $\theta $ (called \emph{Lee form}) such that
\begin{equation}
d_\theta \omega :=d\omega -\theta \wedge \omega =0.
\label{adapted_cohomology}
\end{equation}

Furthermore, assume the manifold to be complex and endowed with a metric
 $g$ compatible with the complex structure $J$. Then, if $\omega $
is determined by $J$ and $g$ (\emph{i.e.} $\omega (\cdot ,\cdot ):=g(\cdot ,J\cdot)$),
then the manifold is said to be \emph{locally conformally K\"ahler}, in
this latter case a local K\"ahler metric being obtained by a conformal change
of the initial metric. If the Lee form $\theta $ is parallel with
respect to the Levi-Civita connection, then the manifold $M$ is a
\emph{Vaisman manifold} (previously called \emph{generalized Hopf manifold}
\cite{Drag-Orn, Vais2}).

Regarding (\ref{adapted_cohomology}), we see that these types of
differentiable manifolds have a natural Morse-Novikov cohomological complex
(called also \emph{adapted cohomology}), which encodes many interesting
properties of the underlying manifolds (see \cite{Orn-Verb, Vais2}).

The above defined geometric structures can be extended to the context of
Riemannian foliations, the transverse geometry of the foliations
corresponding to the geometry of the manifolds. We thus obtain
\emph{l.c.s. foliations}, \emph{K\"ahler } and \emph{l.c.K. foliations} and, respectively, \emph{Vaisman
foliations} (see \cite{Bar-Drag, Ida-Pop}). We emphasize the interplay
between the above defined spaces: for instance, classical Vaisman manifolds
are examples of K\"ahler foliations of dimension $2$ \cite{Bar-Drag}.

The simplest examples of l.c.s. and l.c.K. foliations are represented by
a smooth Riemannian submersion $f:M\rightarrow N$, the base manifold $N$
being a l.c.s. (l.c.K., respectively) manifold \cite{Bar-Drag}.
In turn, Proposition \ref{Propo} provides a condition for the non-existence of such
transverse structures.

\begin{corollary}
Let $\left( M,\mathcal{F}\right) $ be a foliated manifold of codimension $2q$,
with $M$ compact. Assume there is a bundle-like metric $g$ on $M$ such
that the basic curvature operator is non-negatively defined and positively
defined at some point. Then the foliation does not admit a transverse
locally symplectic structure (and consequently there is no transverse l.c.K.
structure with respect to any bundle-like metric defined on $\left( M,\mathcal{F}\right) $).
\end{corollary}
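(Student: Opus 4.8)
The plan is to argue by contraposition: a transverse locally conformally symplectic structure on $(M,\mathcal{F})$ would produce a nontrivial basic Morse-Novikov cohomology class in degree $2$, contradicting the vanishing guaranteed by Proposition \ref{Propo}. First I would recall that a transverse l.c.s. structure is, by definition, a basic nondegenerate $2$-form $\omega \in \Omega_b^2(\mathcal{F})$ together with a basic closed Lee form $\theta \in \Omega_b^1(\mathcal{F})$ satisfying $d_b\omega - \theta \wedge \omega = 0$, that is $d_{b,\theta}\omega = 0$. Thus $\omega$ is automatically a $d_{b,\theta}$-closed basic $2$-form, and it defines a class $[\omega] \in H_{b,\theta}^2(\mathcal{F})$.

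The crux is to show that this class is nontrivial. Here I would invoke nondegeneracy: since the codimension is $2q$, the form $\omega$ is nondegenerate on $Q$ precisely when $\omega^q$ (the $q$-fold wedge power) is a nowhere-vanishing transverse $2q$-form, hence a nonzero multiple of the transverse volume form $\mathrm{vol}^Q$. A standard computation shows that if $\omega = d_{b,\theta}\eta$ were $d_{b,\theta}$-exact, then $\omega^q$ would be $d_{b,q\theta}$-exact (using the Leibniz-type rule $d_{b,\theta}(\omega^q) = q\,\omega^{q-1}\wedge d_{b,\theta}\omega = 0$ together with $d_{b,\theta}(\eta \wedge \omega^{q-1}) = \omega^q - (q-1)\,\eta \wedge \omega^{q-2}\wedge d_{b,\theta}\omega = \omega^q$), so that $[\omega^q]$ vanishes in the top twisted cohomology $H_{b,q\theta}^{2q}(\mathcal{F})$. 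But Theorem \ref{Thm_3} (applied with twisting form $q\theta$, which is basic and closed) forces $\omega^q$, being a nowhere-vanishing multiple of $\mathrm{vol}^Q$, to be $d_{b,q\theta}$-nonexact unless $q\theta$ is itself exact; and more directly one sees that a nowhere-zero basic top-form cannot be twisted-exact on a closed connected manifold, exactly as in the proof of Theorem \ref{Thm_3}. Consequently $[\omega] \neq 0$ in $H_{b,\theta}^2(\mathcal{F})$.

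Putting these together: the hypothesis that the basic curvature operator is non-negative and positive at some point lets me apply Proposition \ref{Propo}, which gives $H_{b,\theta}^i(\mathcal{F}) = 0$ for all $0 < i < 2q$, in particular $H_{b,\theta}^2(\mathcal{F}) = 0$ (here I use $q \geq 2$, i.e. codimension $2q \geq 4$; the degenerate low-codimensional cases would need separate comment). This contradicts the nontriviality of $[\omega]$ just established, so no transverse l.c.s. structure can exist. Finally, since any transverse l.c.K. structure determines an underlying transverse l.c.s. structure via $\omega(\cdot,\cdot) := g(\cdot, J\cdot)$, the nonexistence of the latter immediately rules out the former.

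The main obstacle I anticipate is the nontriviality argument for $[\omega]$, specifically controlling the interplay between the power $\omega^q$ and the twisting form: one must verify carefully that $\omega^q$ represents a class in the top-degree twisted complex with twisting $q\theta$ rather than $\theta$, and that the Leibniz identity for $d_{b,\theta}$ on wedge products behaves as expected. A secondary subtlety is the precise range of degrees in Proposition \ref{Propo}, which excludes the endpoints $i=0$ and $i=q$; since we need vanishing in degree $2$, this is only an issue when $q=2$ (codimension $4$), where degree $2$ coincides with the top excluded degree, and in that boundary case one would instead appeal directly to Theorem \ref{Thm_3} or to the positivity of $\beta_\theta$ via Theorem \ref{Thm_4} to secure the contradiction.
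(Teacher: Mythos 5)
The central step of your argument --- that a transverse l.c.s.\ form $\omega$ must define a \emph{nonzero} class in $H_{b,\theta}^2(\mathcal{F})$ --- is false, and the justification you offer reads Theorem \ref{Thm_3} backwards. Theorem \ref{Thm_3} (like the classical Guedira--Lichnerowicz result \cite{Lich-Gued} it generalizes) asserts that when the twisting one-form is closed but \emph{not} exact, the top-degree twisted cohomology group \emph{vanishes}; vanishing means that every twisted-closed top-degree form, in particular the nowhere-vanishing form $\omega^q$, \emph{is} twisted-exact. So your Leibniz computation (which is correct) produces no contradiction: exactness of $\omega^q$ is automatic precisely in the case you need it to fail, namely when the twisting form is non-exact. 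Your assertion that ``a nowhere-zero basic top-form cannot be twisted-exact on a closed connected manifold, exactly as in the proof of Theorem \ref{Thm_3}'' is the negation of what that proof shows. A concrete counterexample to your key claim: the Vaisman foliation of Example \ref{ex1} (or the Hopf manifold $S^1\times S^{2n-1}$ viewed as a foliation by points) carries a transverse l.c.s.\ structure, yet by Theorem \ref{Thm_2} \emph{all} of its twisted basic cohomology groups vanish (there $\kappa=0$, so $\tilde H_{b,\theta}^i=H_{b,\theta}^i$); its l.c.s.\ form is twisted-exact and nothing can be contradicted. For the same reason, your fallback for low codimension (``appeal directly to Theorem \ref{Thm_3}'') cannot work either.

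The intended proof is the one the paper calls straightforward, and it runs through the Lee form, not through $\omega$. Proposition \ref{Propo} says that under the curvature hypothesis \emph{every closed basic one-form is exact}. On the other hand, by the convention adopted in Section \ref{l.c.s._l.c.K.} (following \cite{Drag-Orn, Vais1}), an l.c.s.\ structure is required not to be globally conformally symplectic, which is equivalent to its Lee form being non-exact: if $\theta=d_bf$ with $f$ basic, then $e^{-f}\omega$ would be a genuine transverse symplectic form. Hence a transverse l.c.s.\ structure would supply a closed, basic, non-exact one-form, contradicting Proposition \ref{Propo} directly; no degree-two cohomology class of $\omega$ is needed. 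The parenthetical l.c.K.\ statement then follows exactly as in your last step, since $\omega(\cdot,\cdot)=g(\cdot,J\cdot)$ produces a transverse l.c.s.\ structure from a transverse l.c.K.\ one. (A minor point: in Proposition \ref{Propo} the letter $q$ denotes the codimension, so applied here it gives vanishing for $0<i<2q$; your worry about codimension $4$ is unfounded, though immaterial given the gap above.)
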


The proof is straightforward, applying our previous results.

Now, assume that the basic curvature operator allows the existence of a
transverse l.c.s. structure $\omega $ with the transverse Lee form $\theta$.
Then, Theorem \ref{Thm_4} provides vanishing conditions for the basic adapted
cohomology.

\begin{corollary}
If the bi-linear form $\beta_{\theta} $ defined in (\ref{beta_theta}) is
non-negatively defined and positively defined at some point, then the groups
of the adapted basic cohomology vanish
\[
\tilde H_{b,\theta }^i=0\mbox{\,\,for\,\,}0<i<2q.
\]
\end{corollary}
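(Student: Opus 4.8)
The plan is to recognize this final corollary as a direct specialization of Theorem~\ref{Thm_4} to the transverse l.c.s.\ setting, so the proof is essentially an application with a dimension bookkeeping check. First I would observe that a l.c.s.\ foliation of codimension $2q$ carries, by definition, a transverse structure whose Lee form $\theta$ is a basic closed one-form satisfying $d_{b,\theta}\omega = 0$; this is precisely the hypothesis needed to form the basic Morse-Novikov (adapted) cohomology complex $(\Omega_b(\mathcal{F}), \tilde d_{b,\theta})$. Thus $\theta$ qualifies as the basic closed one-form required by Theorem~\ref{Thm_4}, and the transverse codimension $q$ appearing there is replaced by $2q$ here since the transverse distribution $Q$ now has (even) rank $2q$.

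Second, I would simply verify that the hypotheses of Theorem~\ref{Thm_4} hold verbatim: the bi-linear map $\beta_\theta(\cdot,\cdot) = \mathcal{L}_{\theta^\sharp}g(\cdot,\cdot) + g(\mathcal{R}\cdot,\cdot)$ defined in \eqref{beta_theta} is assumed non-negatively defined and positively defined at some point $x \in M$. These are exactly the curvature-type conditions invoked in Theorem~\ref{Thm_4}, and no additional structure from the l.c.s.\ hypothesis is needed to run the Weitzenb\"ock-Bochner argument. Consequently the conclusion $\tilde H_{b,\theta}^i = 0$ follows immediately for the admissible degree range.

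The main (and essentially only) point requiring care is the degree range. Theorem~\ref{Thm_4} yields vanishing for $0 < i < q$ where $q$ is the transverse codimension; here the codimension is $2q$, so the corresponding range becomes $0 < i < 2q$, which matches the stated conclusion. I would note that the endpoints $i = 0$ and $i = 2q$ are excluded, exactly as in Theorem~\ref{Thm_4}, because the Bochner argument there does not control the extreme degrees (the top degree being handled separately, under different hypotheses, in Theorem~\ref{Thm_3}). There is no genuine obstacle here: since $\omega$ itself plays no role in the Weitzenb\"ock estimate beyond guaranteeing that $\theta$ is a legitimate basic closed Lee form, the corollary is a transcription of Theorem~\ref{Thm_4} with $q$ replaced by $2q$. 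I would therefore conclude that the proof is immediate, writing only that the statement follows by applying Theorem~\ref{Thm_4} to the transverse Lee form $\theta$ of the l.c.s.\ structure on the codimension-$2q$ foliation.
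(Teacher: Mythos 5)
Your proposal is correct and is exactly the paper's argument: the paper itself treats this corollary as an immediate specialization of Theorem~\ref{Thm_4}, noting only that the transverse Lee form $\theta$ of the l.c.s.\ structure is a basic closed one-form and that the codimension is $2q$, so the vanishing range becomes $0<i<2q$. Your additional care about the excluded extreme degrees (with the top degree handled separately by Theorem~\ref{Thm_3}) matches the paper's own remarks following the corollary.
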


Note that the top dimension cohomology group cannot be addressed with the
above result. In turn, this can be done using Theorem \ref{Thm_3}.

\begin{corollary}
If the basic Lee form $\theta $ is not exact, then $\tilde H_{b,\theta}^{2q}=0$.
\end{corollary}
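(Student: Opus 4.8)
The plan is to verify that all hypotheses of Theorem \ref{Thm_3} are met in the present l.c.s. setting and then invoke that theorem directly, reading $q$ as $2q$ since the transverse dimension of an l.c.s. foliation is even. In other words, the corollary is a straight specialization, and the only work is to match the transverse l.c.s. data to the standing assumptions of Theorem \ref{Thm_3}.

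First I would extract a transverse orientation from the l.c.s. structure. By definition the transverse l.c.s. form $\omega$ is non-degenerate on the transverse distribution $Q$, so its $q$-fold wedge power $\omega^{\wedge q}$ is a nowhere-vanishing basic $2q$-form, which serves as a global transverse volume form $\mathrm{vol}^Q$. This shows the foliation is transversally oriented, which is precisely the hypothesis that the proof of Theorem \ref{Thm_3} uses when it writes a top-degree basic form as $\alpha = f\cdot\mathrm{vol}^Q$. Next I would note that the Lee form $\theta$ is a basic closed one-form: closedness is the l.c.s. requirement $d\theta=0$ built into equation (\ref{adapted_cohomology}), and $\theta$ is basic because it is the transverse Lee form of the transverse structure. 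By the hypothesis of the corollary, $\theta$ is not exact.

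Finally I would dispose of the remaining standing conditions. The manifold $M$ is compact and connected as in the ambient setting, and by Domínguez's theorem recalled in Section \ref{2} we may, after a change of bundle-like metric that leaves the transverse metric unchanged (and hence leaves $\omega$ and $\theta$ intact), assume that the mean curvature $\kappa$ is basic. With every hypothesis of Theorem \ref{Thm_3} verified for a foliation of transverse dimension $2q$, that theorem applies verbatim and gives
\[
\tilde H_{b,\theta}^{2q}=H_{b,\frac12\kappa+\theta}^{2q}(\mathcal{F})=0.
\]

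The only genuinely substantive step is the first one: producing a globally defined transverse volume form from the non-degeneracy of $\omega$, so that the argument in Theorem \ref{Thm_3} (solving $d_bf+f\theta=0$ leaf-locally and using connectedness to propagate the vanishing of $f$) can be run unchanged. Everything else is bookkeeping that reconciles the present notation and data with the hypotheses of that theorem.
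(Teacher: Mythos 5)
Your proof is correct and takes essentially the same route as the paper, which deduces this corollary directly from Theorem \ref{Thm_3} applied to the transverse Lee form in codimension $2q$. Your extra observation that the non-degeneracy of $\omega$ makes $\omega^{\wedge q}$ a global transverse volume form, supplying the transverse orientability hypothesis of Theorem \ref{Thm_3}, is a detail the paper leaves implicit and you verify it correctly.
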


\begin{remark}
The above result stands as a generalization of \cite[Theorem 2.9]{Drag-Orn}
for the case when the manifold $M$ is compact.
\end{remark}

In the final part of the paper we deal with Vaisman foliations. We first
observe that the suspension foliation constructed in  Example \ref{ex1}, endowed
with a parallel Lee form is consequently a Vaisman foliation. On the other
hand, Theorem \ref{Thm_2} implies that the groups of the basic adapted cohomology are
trivial.

\begin{corollary}
For any Vaisman foliation:
\[
\tilde H_{b,\theta }^i=0, \mbox{\,\,for\,\,}0<i<2q.
\]
\end{corollary}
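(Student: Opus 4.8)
The plan is to deduce this corollary directly from the preceding results, specifically Theorem \ref{Thm_2}, by verifying that a Vaisman foliation meets the hypotheses required there. First I would recall the definitions assembled in this section: a Vaisman foliation is a l.c.K. foliation whose transverse Lee form $\theta$ is parallel with respect to the (transverse) Levi-Civita connection, which in the foliated setting is precisely the Bott connection $\nabla$ acting on basic forms. The codimension is $2q$, and the relevant basic adapted cohomology is exactly the twisted basic Morse-Novikov cohomology $\tilde H_{b,\theta}^i(\mathcal{F}) = H_{b,\frac12\kappa+\theta}^i(\mathcal{F})$ introduced in Section \ref{2.2}.

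The key step is the observation that the parallelism of the Lee form $\theta$ is exactly the hypothesis of Theorem \ref{Thm_2}. Since $\theta$ is a basic closed one-form (being the Lee form of a transverse l.c.s. structure) and it is parallel with respect to the Bott connection $\nabla$, Theorem \ref{Thm_2} applies verbatim and yields
\[
\tilde H_{b,\theta}^i(\mathcal{F}) = H_{b,\frac12\kappa+\theta}^i(\mathcal{F}) = 0
\]
for all $0 \le i \le 2q$ (here the codimension $q$ in the statement of Theorem \ref{Thm_2} is replaced by $2q$, the codimension of the l.c.K. foliation). In particular the result holds for the range $0 < i < 2q$ claimed in the corollary. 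Thus the proof is essentially a matter of matching definitions and invoking the earlier theorem, much as the discussion immediately preceding the statement already indicates.

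The only point requiring genuine care, and hence the main obstacle, is confirming that ``parallel with respect to the Levi-Civita connection'' in the ambient l.c.K. sense descends correctly to ``parallel with respect to the Bott connection $\nabla$'' at the transverse (basic) level, so that Theorem \ref{Thm_2} is legitimately applicable. Because the transverse geometry of a Riemannian foliation models the geometry of the l.c.K. manifold (with the Bott connection playing the role of the transverse Levi-Civita connection on $Q$), this identification is the substance of the Vaisman condition in the foliated context; I would make it explicit by noting that the transverse Lee form is basic and that $\nabla\theta = 0$ is the defining feature of a Vaisman foliation, exactly as in the absolute case treated in \cite{Drag-Orn, Vais2}. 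Once this is in place, nontriviality of $\theta$ (which holds since a genuine l.c.K. structure is not globally conformally K\"ahler, per the convention adopted at the start of this section) guarantees that $\theta$ is not the trivial form, and Theorem \ref{Thm_2} delivers the vanishing.
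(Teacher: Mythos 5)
Your proposal is correct and takes essentially the same route as the paper: the corollary is presented there precisely as a direct application of Theorem~\ref{Thm_2}, since the Lee form of a Vaisman foliation is a basic, nontrivial one-form that is parallel with respect to the Bott (transverse Levi-Civita) connection. Your extra care in checking nontriviality of $\theta$ (via the convention excluding globally conformally K\"ahler structures) and in identifying transverse Levi-Civita parallelism with Bott parallelism only makes explicit what the paper leaves implicit.
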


\begin{remark}
The above result is an extension of \cite{Orn-Verb},
where the triviality of the adapted cohomology is derived directly from the
structure theorem of compact Vaisman manifold \cite{Orn-Verb1}. For the
general case of Riemannian foliations a similar attempt is not a trivial
extension, as structural aspects of Riemannian foliations should be also
considered \cite{Mo}.
\end{remark}

\section*{Acknowledgement}

The authors thank the referee for very carefully reading a first version of the paper and for his most useful suggestions.

\hfill

\hfill

{\small

\noindent {\sc Liviu Ornea\\
University of Bucharest, Faculty of Mathematics, \\14
Academiei str., 70109 Bucharest, Romania. \emph{and}\\
Institute of Mathematics ``Simion Stoilow" of the Romanian Academy,\\
21, Calea Grivitei Str.
010702-Bucharest, Romania }\\
\tt lornea@fmi.unibuc.ro, \ \ liviu.ornea@imar.ro

\hfill

\noindent{\sc Vladimir Slesar\\
Department of Mathematics, University of Craiova,\\
13 Al.I. Cuza Str., 200585-Craiova, Romania}\\
\tt vlslesar@central.ucv.ro
}

\end{document}